\newtheorem{theorem}{Theorem}[section]
\newtheorem{lemma}[theorem]{Lemma}
\newtheorem{proposition}[theorem]{Proposition}
\newtheorem{corollary}[theorem]{Corollary}
\newtheorem{remark}[theorem]{Remark}
\newtheorem{conjecture}[theorem]{Conjecture}
\newtheorem{question}[theorem]{Question}
\theoremstyle{definition}
\numberwithin{equation}{section}
\begin{document}
%%%%%%%%%%%%%%%%%%%%%%%%%%%%%%%%%%%%%%%%%%%%%%%%%%%%%%%%%%%%%%%%%%%%%%%%%%%%%%
\title{A geometric heat flow for vector fields}
%%%%%%%%%%%%%%%%%%%%%%%%%%%%%%%%%%%%%%%%%%%%%%%%%%%%%%%%%%%%%%%%%%%%%%%%%%%%%%
\author{Yi Li}
\address{Department of Mathematics, Shanghai Jiao Tong University,
800 Dong Chuan Road, Ming Hang District, Shanghai, 200240, China}
\email{yilicms@gmail.com}

\author{Kefeng Liu}
\address{Department of Mathematics, UCLA, Los Angeles, CA 90095-1555; Center of Mathematical Sciences, Zhejiang University, Hangzhou, China 310027}
\email{liu@math.ucla.edu}

\subjclass[2010]{Primary 53C44, 35K55}

%\date{Received by the editors August 29, 2010 and accepted in January 30, 2011}
\keywords{Geometric heat flow, Killing vector fields,
Yano's theorem, Navier-Stokes equations, Kazdan-Warner-Bourguignon-Ezin
identity}

%%%%%%%%%%%%%%%%%%%%%%%%%%%%%%%%%%%%%%%%%%%%%%%%%%%%%%%%%%%%%%%%%%%%%%%%%%%%%%
\begin{abstract} In this paper we introduce and
study a geometric heat flow to find Killing vector fields on closed Riemannian manifolds with positive sectional curvature. We study its various properties, prove the global existence of the solution of this flow, discuss its convergence and possible applications, and its relation to the Navier-Stokes equations on manifolds and Kazdan-Warner-Bourguignon-Ezin identity
 for conformal Killing vector fields. We also provide two new criterions on 
 the existence of Killing vector 
 fields. The similar flow to finding
holomorphic vector fields on K\"ahler manifolds will be studied in \cite{LL2}. 
\end{abstract}

%%%%%%%%%%%%%%%%%%%%%%%%%%%%%%%%%%%%%%%%%%%%%%%%%%%%%%%%%%%%%%%%%%%%%%%%%%%%%%
\maketitle
%\tableofcontents

%%%%%%%%%%%%%%%%%%%%%%%%%%%%%%%%%%%%%%%%%%%%%%%%%%%%%%%%%%%%%%%%%%%%%%%%%%%%%%
%%%%%%%%%%%%%%%%%%%%%%%%%%%%%%%%%%%%%%%%%%%%%%%%%%%%%%%%%%%%%%%%%%%%%%%%%%%%%%
\section{A geometric heat flow for vector fields}\label{section1}
%%%%%%%%%%%%%%%%%%%%%%%%%%%%%%%%%%%%%%%%%%%%%%%%%%%%%%%%%%%%%%%%%%%%%%%%%%%%%%
%%%%%%%%%%%%%%%%%%%%%%%%%%%%%%%%%%%%%%%%%%%%%%%%%%%%%%%%%%%%%%%%%%%%%%%%%%%%%%

Recently, we have witnessed the power of geometric flows in studying lots of
problems in geometry and topology. In this paper we introduce a geometric heat flow for vector fields on a Riemannian manifold and study its varies properties.

Throughout this paper, we adopt the Einstein
summation and notions as those in \cite{CLN}. All manifolds and vector
fields are smooth; a manifold is said to be {\it closed} if it is compact
and without boundary. We shall often raise and lower indices for tensor
fields.

%%%%%%%%%%%%%%%%%%%%%%%%%%%%%%%%%%%%%%%%%%%%%%%%%%%%%%%%%%%%%%%%%%%%%%%%%%%%%%
\subsection{Deformation tensor field of a vector field}\label{subsection1.1}
%%%%%%%%%%%%%%%%%%%%%%%%%%%%%%%%%%%%%%%%%%%%%%%%%%%%%%%%%%%%%%%%%%%%%%%%%%%%%%

Let $(M,g)$ be a closed and orientable Riemannian manifold. To a
vector field $X$ we associate its deformation $(0,2)$-tensor field
${\bf Def}(X)$, which is an obstruction of $X$ to be Killing and is locally defined by
\begin{equation}
\left({\bf Def}(X)\right)_{ij}:=
\frac{\nabla_{i}X_{j}+\nabla_{j}X_{i}}{2},\label{1.1}
\end{equation}
where $\nabla$ denotes the Levi-Civita connection of $g$. Equivalently, it is exactly (up to a constant factor) the Lie derivative of $g$ along the vector field $X$, i.e., $\mathcal{L}_{X}g$. We say that $X$ is a {\it Killing vector field} if ${\bf Def}(X)=0$. Consider the
$L^{2}$-norm of ${\bf Def}(X)$:
\begin{equation}
\mathfrak{L}(X):=\int_{M}|{\bf Def}(X)|^{2}dV,\label{1.2}
\end{equation}
where $dV$ stands for the volume form of $g$ and $|\cdot|$ means the
norm of ${\bf Def}(X)$ with respect to $g$. Clearly that the critical point $X$ of $\mathfrak{L}$ satisfies
\begin{equation}
\Delta X^{i}+\nabla^{i}{\rm div}(X)+R^{i}{}_{j}X^{j}=0.\label{1.3}
\end{equation}
Here and henceforth, $\Delta:= g^{ij}\nabla_{i}\nabla_{j}$ is
the Laplace-Beltrami operator of $g$ and $R_{ij}$ denotes the Ricci curvature
of $g$. In fact
\begin{eqnarray*}
\frac{d}{dt}\mathfrak{L}(X_{t})&=&\frac{1}{2}\int_{M}
\langle{\bf Def}(X_{t}),\partial_{t}{\bf Def}(X_{t})\rangle\!\ dV\nonumber\\
&=&\frac{1}{2}\int_{M}\left(\nabla^{i}(X_{t})^{j}+\nabla^{j}(X_{t})^{i}\right)
\left(\nabla_{i}\partial_{t}(X_{t})_{j}+\nabla_{j}\partial_{t}(X_{t})_{i}
\right)dV\nonumber\\
&=&-\int_{M}\left[\Delta(X_{t})^{i}\cdot\partial_{t}(X_{t})_{i}
+\nabla^{j}\nabla^{i}(X_{t})^{j}\cdot\partial_{t}(X_{t})_{i}\right]dV\nonumber\\
&=&-\int_{M}\left[\Delta(X_{t})^{i}
+\nabla^{i}{\rm div}(X_{t})+R^{i}{}_{j}(X_{t})^{j}\right]
\partial_{t}(X_{t})^{i}dV.
\end{eqnarray*}

%%%%%%%%%%%%%%%%%%%%%%%%%%%%%%%%%%%%%%%%%%%%%%%%%%%%%%%%%%%%%%%%%%%%%%%%%%%%%%
\subsection{A geometric heat flow for vector fields}\label{subsection1.2}
%%%%%%%%%%%%%%%%%%%%%%%%%%%%%%%%%%%%%%%%%%%%%%%%%%%%%%%%%%%%%%%%%%%%%%%%%%%%%%

Motivated by (\ref{1.3}), we introduce a geometric heat flow for vector fields:
\begin{equation}
\partial_{t}(X_{t})^{i}=\Delta (X_{t})^{i}+\nabla^{i}{\rm div}(X_{t})+R^{i}{}_{j}(X_{t})^{j}, \ \ \ X_{0}=X,\label{1.4}
\end{equation}
where $X$ is a fixed vector field on $M$ and $\partial_{t}:=\frac{\partial}{\partial t}$ is the time derivative. If we define
${\rm Ric}^{\sharp}$, the $(1,1)$-tensor field associated to ${\rm Ric}$, by
\begin{equation*}
g\left({\rm Ric}^{\sharp}(X),Y\right):=t{\rm Ric}(X,Y),
\end{equation*}
where $X, Y$ are two vector fields, then ${\rm Ric}^{\sharp}$ is an operator
on the space of vector fields, denoted by $C^{\infty}(TM)$, and the
flow (\ref{1.4}) can be rewritten as
\begin{equation}
\partial_{t}X_{t}=\Delta X_{t}+\nabla{\rm div}(X_{t})
+{\rm Ric}^{\sharp}(X_{t}).\label{1.5}
\end{equation}

In 1952, Yano (e.g., \cite{Y1,Y4, YB}) showed that a vector field $X=X^{i}\frac{\partial}{\partial x^{i}}$ is a Killing vector field if and only if it satisfies
\begin{equation}
\Delta X^{i}+R^{i}{}_{j}X^{j}=0, \ \ \ {\rm div}(X)=0.\label{1.6}
\end{equation}
His result depends on an integral formula, now called {\it Yano's integral formula},
\begin{equation}
0=\int_{M}\left[{\rm Ric}(X,X)-|\nabla X|^{2}+
2|{\bf Def}(X)|^{2}-|{\rm div}(X)|^{2}\right]dV,\label{1.7}
\end{equation}
which holds for any vector field $X$. This integral formula lets us define
so-called the {\it Bochner-Yano integral} for every vector field $X$:
\begin{equation}
\mathcal{E}(X):=\int_{M}\left[|\nabla X|^{2}+|{\rm div}(X)|^{2}
-{\rm Ric}(X,X)\right]dV.\label{1.8}
\end{equation}
Consequently, Yano's integral formula implies that
$\mathcal{E}(X)$ is always nonnegative and $\mathcal{E}(X)=2\mathfrak{L}(X)$
for every vector field $X$. On the other hand, Watanabe \cite{W} proved that $X$ is a Killing vector field if and only if $\mathcal{E}(X)=0$, and hence if and only if $\mathfrak{L}(X)=0$.

Yano's equations (\ref{1.6}) induces a system of equations, called the
{\it Bochner-Yano flow}:
\begin{equation}
\partial_{t}(X_{t})^{i}=\Delta(X_{t})^{i}+R^{i}{}_{j}(X_{t})^{j}, \ \ \
{\rm div}(X_{t})=0.\label{1.9}
\end{equation}
Notice that Yano's equation (\ref{1.6}) (resp., Bochner-Yano flow (\ref{1.9})) is a special case of our equation (\ref{1.3}) (resp., our flow (\ref{1.4})).

\begin{proposition}\label{p1.1} If $X_{t}$ is the solution of the flow (\ref{1.4}), then
\begin{eqnarray}
\mathcal{E}(X_{t})&\geq&0,\label{1.10}\\
\frac{d}{dt}\mathcal{E}(X_{t})&=&-2\int_{M}|\partial_{t}X_{t}|^{2}dV \ \ \leq \ \ 0,\label{1.11}\\
\mathcal{E}(X_{t})&=&-\frac{d}{dt}\left(\frac{1}{2}\int_{M}|X_{t}|^{2}dV\right).\label{1.12}
\end{eqnarray}
Consequently, $\mathcal{E}(X_{t})$ is monotone nonincreasing and $\int_{M}|X_{t}|^{2}dV$ is also monotone nonincreasing.
\end{proposition}

\begin{proof} The first one directly follows from (\ref{1.7}). Since the flow (\ref{1.4}) is the gradient flow of the functional $\mathcal{E}$, we prove the second one. To prove (\ref{1.12}), we use the formula $\frac{1}{2}\Delta|X|^{2}=\left\langle X,\Delta X\right\rangle+|\nabla X|^{2}$ to deduce that
\begin{eqnarray*}
\mathcal{E}(X_{t})&=&\int_{M}\left[\frac{1}{2}\Delta|X_{t}|^{2}-(X_{t})_{i}\Delta(X_{t})^{i}+|{\rm div}(X_{t})|^{2}-{\rm Ric}(X_{t},X_{t})\right]dV\\
&=&-\int_{M}\left[(X_{t})_{i}\Delta(X_{t})^{i}
-{\rm div}(X_{t})\cdot{\rm div}(X_{t})+{\rm Ric}(X_{t},X_{t})\right]dV\\
&=&-\int_{M}\left[(X_{t})_{i}\Delta(X_{t})^{i}+(X_{t})_{i}\nabla^{i}{\rm div}(X_{t})
+(X_{t})_{i}\cdot R^{i}{}_{j}(X_{t})^{j}\right]dV\\
&=&-\int_{M}(X_{t})_{i}\left[\Delta(X_{t})^{i}+\nabla^{i}{\rm div}(X_{t})
+R^{i}{}_{j}(X_{t})^{j}\right]dV\\
&=&-\frac{1}{2}\int_{M}\partial_{t}
|X_{t}|^{2}dV.
\end{eqnarray*}
Hence the conclusion is obvious.
\end{proof}

\begin{corollary} \label{c1.2} If $X_{t}$ is the solution of the flow
(\ref{1.4}) for $t\in[0,T]$, then we have
\begin{equation}
\int^{T}_{0}\int_{M}|\partial_{t}X_{t}|^{2}dVdt\leq 2\mathcal{E}
(X).\label{1.13}
\end{equation}
\end{corollary}

\begin{proof} For any $T$, we have
\begin{equation*}
-\frac{1}{2}\int^{T}_{0}\int_{M}|\partial_{t}X_{t}|^{2}dVdt=\mathcal{E}(X_{T})-\mathcal{E}(X)
\geq-\mathcal{E}(X)
\end{equation*}
since $\mathcal{E}$ is nonnegative. This proves (\ref{1.13}).
\end{proof}

%%%%%%%%%%%%%%%%%%%%%%%%%%%%%%%%%%%%%%%%%%%%%%%%%%%%%%%%%%%%%%%%%%%%%%%%%%%%%%
\subsection{Evolution equations}\label{subsection1.3}
%%%%%%%%%%%%%%%%%%%%%%%%%%%%%%%%%%%%%%%%%%%%%%%%%%%%%%%%%%%%%%%%%%%%%%%%%%%%%%

To study the long time existence and the convergence of the geometric heat
flow (\ref{1.4}) we prove its several evolution equations.

\begin{lemma} \label{l1.3}If $X_{t}$ is the solution of (\ref{1.4}), then
\begin{equation}
\partial_{t}|X_{t}|^{2}=\Delta|X_{t}|^{2}-2|\nabla X_{t}|^{2}+
2\left\langle X_{t},\nabla{\rm div}(X_{t})\right\rangle+2{\rm Ric}(X_{t},
X_{t}).\label{1.14}
\end{equation}
\end{lemma}

\begin{proof} Calculate
\begin{eqnarray*}
\partial_{t}|X_{t}|^{2}&=&2(X_{t})_{i}\partial_{t}(X_{t})^{i}\\
&=&2(X_{t})_{i}\left(\Delta(X_{t})^{i}+\nabla^{i}{\rm div}(X_{t})
+R^{i}{}_{j}(X_{t})^{j}\right)\\
&=&\Delta|X_{t}|^{2}-2|\nabla X_{t}|^{2}+
2\left\langle X_{t},\nabla{\rm div}(X_{t})\right\rangle
+2{\rm Ric}(X_{t},X_{t})
\end{eqnarray*}
which proves (\ref{1.4}).
\end{proof}

\begin{lemma} \label{l1.4} If $X_{t}$ is the solution of (\ref{1.4}), then
\begin{eqnarray}
\partial_{t}|\nabla X_{t}|^{2}&=&\Delta|\nabla X_{t}|^{2}
-2\left|\nabla^{2}X_{t}\right|^{2}-4R_{ijk\ell}\nabla^{i}(X_{t})^{k}\cdot
\nabla^{j}(X_{t})^{\ell}\nonumber\\
&&- \ 2R_{ij}\nabla^{i}(X_{t})^{k}\cdot\nabla^{j}(X_{t})_{k}+2R_{ij}\nabla^{k}(X_{t})^{i}\cdot \nabla_{k}(X_{t})^{j}\nonumber\\
&&+ \ 2\left\langle{\bf Def}(X_{t}),\nabla\nabla{\rm div}(X_{t})\right\rangle\label{1.15}\\
&&+ \ 2\left(\nabla_{i}R_{jk}-\nabla^{\ell}R_{\ell ikj}\right)(X_{t})^{k}\nabla^{i}(X_{t})^{j}.\nonumber
\end{eqnarray}
\end{lemma}

\begin{proof} From the definition of the flow we have
\begin{eqnarray*}
\partial_{t}|\nabla X_{t}|^{2}&=&2\nabla^{i}(X_{t})_{j}\cdot\nabla_{i}\partial_{t}
(X_{t})^{j}\\
&=&2\nabla^{i}(X_{t})_{j}\cdot\nabla_{i}\left(\Delta(X_{t})^{j}
+\nabla^{j}{\rm div}(X_{t})+R^{j}{}_{k}(X_{t})^{k}\right).
\end{eqnarray*}
We use the Ricci identity to deduce that
\begin{eqnarray*}
\nabla_{i}\Delta(X_{t})^{j}&=&\nabla_{i}\left(g^{pq}\nabla_{p}
\nabla_{q}(X_{t})^{j}\right)\\
&=&g^{pq}\nabla_{i}\nabla_{p}\nabla_{q}(X_{t})^{j}\\
&=&g^{pq}\left[\nabla_{p}\nabla_{i}\nabla_{q}(X_{t})^{j}
-R_{ipq}{}^{r}\nabla_{r}(X_{t})^{j}+R_{ipr}{}^{j}\nabla_{q}(X_{t})^{r}\right]\\
&=&\nabla^{q}\left[\nabla_{q}\nabla_{i}(X_{t})^{j}+R_{iqr}{}^{j}
(X_{t})^{r}\right]-R_{ir}\nabla^{r}(X_{t})^{j}+R_{ipr}{}^{j}\nabla^{p}(X_{t})^{r}\\
&=&\Delta\nabla_{i}(X_{t})^{j}+\nabla^{q}\left(R_{iqr}{}^{j}
(X_{t})^{r}\right)-R_{ir}\nabla^{r}(X_{t})^{j}+R_{ipr}{}^{j}\nabla^{p}(X_{t})^{r}\\
&=&\Delta\nabla_{i}(X_{t})^{j}+\nabla^{q}R_{iqr}{}^{j}\cdot(X_{t})^{r}
+2R_{iqr}{}^{j}\nabla^{q}(X_{t})^{r}-R_{ir}\nabla^{r}(X_{t})^{j}.
\end{eqnarray*}
Plugging it into the equation for $\partial_{t}|\nabla X_{t}|^{2}$ we arrive at
\begin{eqnarray*}
\partial_{t}|\nabla X_{t}|^{2}&=&2\nabla^{i}(X_{t})_{j}\Big[\Delta\nabla_{i}(X_{t})^{j}
+\nabla^{q}R_{iq r}{}^{j}(X_{t})^{r}+2R_{iqr}{}^{j}\nabla^{q}(X_{t})^{r}\\
&&- \ R_{ir}\nabla^{r}(X_{t})^{j}+\nabla_{i}\nabla^{j}{\rm div}(X_{t})+(X_{t})^{k}\nabla_{i}R^{j}{}_{k}+R^{j}{}_{k}\nabla_{i}(X_{t})^{k}\Big]\\
&=&\Delta|\nabla X_{t}|^{2}-2\left|\nabla^{2}X_{t}\right|^{2}
+2\nabla^{q}R_{iqr}{}^{j}\nabla^{i}(X_{t})_{j}\cdot(X_{t})^{r}\\
&&+ \ 4R_{iqrj}\nabla^{q}(X_{t})^{r}\nabla^{i}(X_{t})^{j}-2R_{ir}
\nabla^{r}(X_{t})^{j}\nabla^{i}(X_{t})_{j}\\
&&+ \ 2\nabla^{i}(X_{t})_{j}\cdot
\nabla_{i}\nabla^{j}{\rm div}(X_{t})
+2\nabla_{i}R^{j}{}_{k}\cdot(X_{t})^{k}\nabla^{i}(X_{t})_{j}\\
&& \ 2R^{j}{}_{k}\nabla^{i}(X_{t})^{k}\nabla^{i}(X_{t})_{j}\\
&=&\Delta|\nabla X_{t}|^{2}-2\left|\nabla^{2}X_{t}\right|^{2}-4
R_{qirj}\nabla^{q}(X_{t})^{r}\nabla^{i}(X_{t})^{j}\\
&&- \ 2R_{ir}\nabla^{r}(X_{t})^{j}\nabla^{i}(X_{t})_{j}+2R_{jk}\nabla_{i}(X_{t})^{k}\nabla^{i}(X_{t})^{j}\\
&&+ \ 2\nabla^{i}(X_{t})_{j}\cdot\nabla_{i}\nabla^{j}{\rm div}(X_{t})+2\nabla_{i}R_{jk}\cdot(X_{t})^{k}\nabla^{i}(X_{t})^{j}\\
&& - \ 2\nabla^{q}R_{qirj}(X_{t})^{r}
\nabla^{i}(X_{t})^{j}.
\end{eqnarray*}
Changing the indices yields the desired result.
\end{proof}

By the Bianchi identity, the above lemma can be written as

\begin{corollary} \label{c1.5} If $X_{t}$ is the solution of the flow
(\ref{1.4}), then
\begin{eqnarray*}
\partial_{t}|\nabla X_{t}|^{2}&=&\Delta|\nabla X_{t}|^{2}-2\left|\nabla^{2}X_{t}\right|^{2}
-4R_{ijk\ell}\nabla^{i}(X_{t})^{k}\nabla^{j}(X_{t})^{\ell}\\
&&- \ 2R_{ij}\nabla^{i}(X_{t})^{k}\nabla^{j}(X_{t})_{k}+2R_{ij}\nabla^{k}(X_{t})^{i}
\nabla_{k}(X_{t})^{j}\\
&&+ \ 2\left(\nabla_{i}R_{jk}-\nabla_{j}R_{ki}+\nabla_{k}R_{ij}\right)
(X_{t})^{k}\nabla^{i}(X_{t})^{j}\\
&&+ \ 2\left\langle{\bf Def}(X_{t}),\nabla
\nabla{\rm div}(X_{t})\right\rangle.
\end{eqnarray*}
\end{corollary}

\begin{lemma} \label{l1.6} (1) If $X_{t}$ is the solution of the flow
(\ref{1.4}), then
\begin{equation}
\partial_{t}{\rm div}(X_{t})=2\Delta{\rm div}(X_{t})
+\langle X_{t},\nabla R\rangle+2R_{ij}\nabla^{i}(X_{t})^{j}\label{1.16}
\end{equation}
(2) If $X_{t}$ is the solution of the flow (\ref{1.4}), then
\begin{eqnarray*}
\partial_{t}|{\rm div}(X_{t})|^{2}&=&2\Delta|{\rm div}(X_{t})|^{2}
-4\left|\nabla{\rm div}(X_{t})\right|^{2}\\
&&+ \ 2{\rm div}(X_{t})\langle X_{t},\nabla R\rangle+4{\rm div}(X_{t})\cdot R_{ij}\nabla^{i}(X_{t})^{j}
\end{eqnarray*}
and
\begin{eqnarray}
\frac{d}{dt}\int_{M}|{\rm div}(X_{t})|^{2}dV&=&
-4\int_{M}|\nabla{\rm div}(X_{t})|^{2}dV\nonumber\\
&&- \ 4\int_{M}{\rm Ric}\left(X_{t},\nabla{\rm div}(X_{t})\right)dV.\label{1.17}
\end{eqnarray}
In particular, if ${\rm Ric}=0$ and ${\rm div}(X)\equiv0$, then ${\rm div}(X_{t})\equiv0$.
\end{lemma}

\begin{proof} According to (\ref{1.4}), one has
\begin{eqnarray*}
\partial_{t}{\rm div}(X_{t})&=&\nabla_{i}\left(\partial_{t}(X_{t})^{i}\right) \ \ = \ \ \nabla_{i}
\left(\Delta(X_{t})^{i}+\nabla^{i}{\rm div}(X_{t})+R^{i}{}_{j}(X_{t})^{j}\right)\nonumber\\
&=&\nabla_{i}\left(\Delta(X_{t})^{i}\right)+\Delta{\rm div}(X_{t})+\nabla^{i}\left(R_{ij}(X_{t})^{j}\right).
\end{eqnarray*}
Next we compute the first term $\nabla_{i}\left(\Delta(X_{t})^{i}\right)$ as follows:
\begin{eqnarray*}
\nabla_{i}\left(\Delta(X_{t})^{i}\right)&=&g^{pq}\nabla_{i}\nabla_{p}\nabla_{q}(X_{t})^{i}\\
&=&g^{pq}\left(\nabla_{p}\nabla_{i}\nabla_{q}(X_{t})^{i}-R_{ipq}{}^{r}\nabla_{r}(X_{t})^{i}
+R_{ipr}{}^{i}\nabla_{q}(X_{t})^{r}\right)\\
&=&\nabla^{q}\left(\nabla_{q}\nabla_{i}(X_{t})^{i}+R_{iqr}{}^{i}
(X_{t})^{r}\right)
-R_{ir}\nabla^{r}(X_{t})^{i}+R_{pr}\nabla^{p}(X_{t})^{r}\\
&=&\Delta\nabla_{i}(X_{t})^{i}+\nabla^{q}\left(R_{qr}(X_{t})^{r}\right).
\end{eqnarray*}
Combining those two expression gives
\begin{eqnarray*}
\partial_{t}{\rm div}(X_{t})&=&2\Delta{\rm div}(X_{t})+2\nabla^{i}\left(R_{ij}(X_{t})^{j}\right)\\
&=&2\Delta{\rm div}(X_{t})+2\nabla^{i}R_{ij}\cdot(X_{t})^{j}+2R_{ij}\nabla^{i}(X_{t})^{j}\\
&=&2\Delta{\rm div}(X_{t})+\nabla_{j}R\cdot(X_{t})^{j}+2R_{ij}\nabla^{i}(X_{t})^{j}
\end{eqnarray*}
proving (\ref{1.16}). For (\ref{1.17}), the evolution equation for $|{\rm div}(X_{t})|^{2}$ is
\begin{eqnarray*}
\partial_{t}|{\rm div}(X_{t})|^{2}&=&2{\rm div}(X_{t})\cdot\partial_{t}{\rm div}(X_{t})\\
&=&2{\rm div}(X_{t})\left(2\Delta{\rm div}(X_{t})+(X_{t})^{i}\nabla_{i}R
+2R_{ij}\nabla^{i}(X_{t})^{j}\right)\\
&=&2\Delta|{\rm div}(X_{t})|^{2}-4|\nabla{\rm div}(X_{t})|^{2}\\
&&+ \ 2{\rm div}(X_{t})\cdot(X_{t})^{i}\nabla_{i}R
+4\left({\rm div}(X_{t})R_{ij}\right)\nabla^{i}(X_{t})^{j}.
\end{eqnarray*}
Integrating both sides over $M$ yields
\begin{eqnarray*}
&&\frac{d}{dt}\int_{M}|{\rm div}(X_{t})|^{2}dV \ \ = \ \ - \ 4\int_{M}|\nabla{\rm div}
(X_{t})|^{2}dV\\
&&+ \ 2\int_{M}{\rm div}(X_{t})\left((X_{t})^{i}\nabla_{i}R\right)dV
-4\int_{M}\nabla^{i}\left({\rm div}(X_{t})R_{ij}\right)(X_{t})^{j}dV.
\end{eqnarray*}
Since
\begin{eqnarray*}
4\nabla^{i}\left({\rm div}(X_{t})R_{ij}\right)(X_{t})^{j}
&=&4\left[\nabla^{i}{\rm div}(X_{t})\cdot R_{ij}+{\rm div}(X_{t})\cdot\nabla^{i}R_{ij}\right](X_{t})^{j}\\
&=&4R_{ij}(X_{t})^{j}\nabla^{i}{\rm div}(X_{t})+2\nabla_{j}R\cdot (X_{t})^{j}{\rm div}(X_{t})
\end{eqnarray*}
it follows that (\ref{1.17}) is true. When ${\rm Ric}=0$, we obtain
\begin{equation*}
\frac{d}{dt}\int_{M}|{\rm div}(X_{t})|^{2}dV\leq0
\end{equation*}
which means $\int_{M}|{\rm div}(X_{t})|^{2}dV\leq\int_{M}|{\rm div}(X)|^{2}dV=0$ and therefore $|{\rm div}(X_{t})|^{2}=0$. Thus ${\rm div}(X_{t})\equiv0$.
\end{proof}

%%%%%%%%%%%%%%%%%%%%%%%%%%%%%%%%%%%%%%%%%%%%%%%%%%%%%%%%%%%%%%%%%%%%%%%%%%%%%%
\subsection{Long-time existence}\label{subsection1.4}
%%%%%%%%%%%%%%%%%%%%%%%%%%%%%%%%%%%%%%%%%%%%%%%%%%%%%%%%%%%%%%%%%%%%%%%%%%%%%%

Now we can state our main results to the flow (\ref{1.4}).

\begin{theorem} \label{t1.7} {\bf (Long-time existence)}Suppose that $(M,g)$
is a closed and orientable Riemannian manifold. Given an initial vector
field, the flow (\ref{1.4}) exists for all time.
\end{theorem}

The main method on proving above theorem is the standard approach in PDEs and an application of Sobolev embedding theorem. After establishing the long-time existence, we can study the convergence problem of
the flow (\ref{1.4}).

\begin{proof} We now turn to prove the short-time existence of the flow (\ref{1.4}). Note that (\ref{1.4}) can be written as
\begin{eqnarray}
\partial_{t}(X_{t})^{i}&=&\Delta(X_{t})^{i}
+\nabla^{i}{\rm div}(X_{t})+R^{i}{}_{j}(X_{t})^{j}\nonumber\\
&=&\nabla^{k}\nabla_{k}(X_{t})^{i}+\nabla^{i}\nabla_{j}(X_{t})^{j}
+R^{i}{}_{j}(X_{t})^{j}\label{1.18}\\
&=&\sum^{m}_{j=1}\left(\delta^{i}_{j}\sum^{m}_{k=1}\nabla^{k}\nabla_{k}+\nabla^{i}\nabla_{j}\right)
(X_{t})^{j}+R^{i}{}_{j}(X_{t})^{j}.\nonumber
\end{eqnarray}
For any $\xi=(\xi_{1},\cdots,\xi_{m})\in\mathbb{R}^{m}$, we have
\begin{equation}
\sum^{m}_{i,j=1}\left(\delta^{i}_{j}\sum^{m}_{k=1}
\xi_{k}\xi_{k}+\xi_{i}\xi_{j}\right)=\sum^{m}_{i,k=1}\xi_{k}\xi_{k}
+\sum^{m}_{i,j=1}\xi_{i}\xi_{j}=m|\xi|^{2}+\sum^{m}_{i,j=1}\xi_{i}\xi_{j},
\label{1.19}
\end{equation}
where $|\xi|\doteqdot\left(\sum^{m}_{k=1}\xi^{2}_{k}\right)^{1/2}$ denotes the
length of $\xi$ in $\mathbb{R}^{m}$. On the other hand, plugging
\begin{equation*}
\sum^{m}_{i,j=1}(\xi_{i}+\xi_{j})^{2}=\sum^{m}_{i,j=1}\left(\xi^{2}_{i}
+\xi^{2}_{j}+2\xi_{i}\xi_{j}\right)=2m|\xi|^{2}+2\sum^{m}_{i,j=1}\xi_{i}\xi_{j}
\end{equation*}
into (\ref{1.19}) yields
\begin{eqnarray*}
\sum^{m}_{i,j=1}\left(\delta^{i}_{j}\sum^{m}_{k=1}\xi_{k}\xi_{k}
+\xi_{i}\xi_{j}\right)&=&\frac{1}{2}\sum^{m}_{i,j=1}(\xi_{i}+\xi_{j})^{2}\\
&=&2\sum^{m}_{i=1}\xi^{2}_{i}+\frac{1}{2}\sum_{i\neq j}(\xi_{i}+\xi_{j})^{2}\\
&\geq&2|\xi|^{2}.
\end{eqnarray*}
Then, by the standard theory for partial differential equations of parabolic
type, we have that the flow (\ref{1.4}) exists for a short time.

Since the flow equation is linear, a standard theory in PDEs implies the
long-time existence.
\end{proof}

%%%%%%%%%%%%%%%%%%%%%%%%%%%%%%%%%%%%%%%%%%%%%%%%%%%%%%%%%%%%%%%%%%%%%%%%%%%%%%
\subsection{Convergence}\label{subsection1.5}
%%%%%%%%%%%%%%%%%%%%%%%%%%%%%%%%%%%%%%%%%%%%%%%%%%%%%%%%%%%%%%%%%%%%%%%%%%%%%%

In what follows, we always assume that $(M,g)$ is a closed and oriented
Riemannian manifold of dimension $m$. Since $M$ is compact, we can find a constant $B$ such that
\begin{equation}
R_{ij}\leq B g_{ij}.\label{1.20}
\end{equation}
Then the energy functional $\mathcal{E}(X_{t})$ satisfies
\begin{equation}
\int_{M}\left[|\nabla X_{t}|^{2}+\left({\rm div}(X_{t})\right)^{2}-B|X_{t}|^{2}\right]dV
\leq\mathcal{E}(X_{t}).\label{1.21}
\end{equation}
Using Proposition \ref{p1.1}, we have
\begin{equation*}
\int_{M}|\nabla X_{t}|^{2}dV\leq\mathcal{E}(X_{t})+B
\int_{M}|X_{t}|^{2}dV=\mathcal{E}(X_{t})+B\cdot u(t)\leq\mathcal{E}(X)+B\cdot u(0),
\end{equation*}
where
\begin{equation*}
u(t):=\int_{M}|X_{t}|^{2}dV.
\end{equation*}
Hence $\nabla X_{t}\in L^{2}(M,TM)$. On the other hand $u(t)\leq u(0)$, we
conclude that
\begin{equation}
||X_{t}||_{H^{1}(M,TM)}\leq C_{1}(M,g,X).\label{1.22}
\end{equation}
By the regularity of parabolic equations and the flow
(\ref{1.4}), we obtain
\begin{equation*}
||X_{t}||_{H^{\ell}(M,TM)}\leq C_{\ell}=C_{\ell}(M,g,X)
\end{equation*}
for each $\ell$. Therefore we can find $X_{\infty}\in H^{\ell}(M,TM)$ and a subsequence $(X_{t_{i}})_{i\in\mathbb{N}}$ such that $X_{t_{i}}\to X_{\infty}$ a.e. as $i\to\infty$. By Sobolev imbedding theorem, $X_{\infty}\in C^{\infty}(M,TM)$ and $X_{t}\to
X_{\infty}$ as $t\to\infty$.

Corollary \ref{c1.2} implies there exists a subsequence, say,
without loss of generality, $(X_{t_{i}})_{i\in\mathbb{N}}$, such that
\begin{equation}
\left|\left|\partial_{t}X_{t}\big|_{t=t_{i}}\right|\right|_{L^{2}
(M,g)}\to0.\label{1.23}
\end{equation}
According to (\ref{1.11}) and (\ref{1.23}), $||\partial_{t}X_{t}||_{L^{2}
(M,g)}$ decreases and converges to $0$ as $t\to\infty$. Therefore the smooth vector
field $X_{\infty}$ satisfies
\begin{equation}
\Delta_{{\rm LB}}(X_{\infty})^{i}+\nabla^i{\rm{div}}(X_{\infty})+R^{i}{}_{j}
(X_{\infty})^{j}=0.\label{1.24}
\end{equation}

In summary, we proved

\begin{theorem} \label{t1.8} {\bf (Convergence)}Suppose that $(M,g)$ is a closed and orientable Riemannian manifold. If $X$ is a vector field, there exists a unique smooth solution $X_{t}$ to the flow (\ref{1.4}) for all time $t$. As $t$ goes to infinity, the vector field
$X_{t}$ converges uniformly to a Killing vector field $X_{\infty}$.
\end{theorem}

\begin{remark} \label{r1.9} As Professor Cliff Taubes remarked that Theorem
\ref{t1.7}
and \ref{t1.8} also follow from an eigenfunction expansion for the
relevant linear operator that defines the flow (\ref{1.4}), which gives a short proof
of those two theorems.
\end{remark}

Theorem \ref{t1.8} does {\it not} give us a nontrivial Killing vector field. For
example, if $X$ is identically zero, then by the uniqueness theorem the limit
vector field is also identically zero. When the Ricci curvature is negative,
Bochner's theorem implies that there is no nontrivial Killing vector field.

To obtain a nonzero Killing vector field, we have the following criterion.

\begin{proposition} \label{p1.10}Suppose that $(M,g)$ is a closed and orientable Riemannian manifold and $X$ is a vector field on $M$.
If $X_{t}$ is the solution of the flow (\ref{1.4}) with
the initial value $X$, then
\begin{equation}
\int^{\infty}_{0}\mathcal{E}(X_{t})\!\ dV<\infty.\label{1.25}
\end{equation}
Let
\begin{equation}
\mathbf{Err}(X):=\frac{1}{2}\int_{M}|X|^{2}dV
-\int^{\infty}_{0}\mathcal{E}(X_{t})\!\ dt.\label{1.26}
\end{equation}
Therefore $\mathbf{Err}(X)\geq0$ and $X_{\infty}$ is nonzero if and only if
$\mathbf{Err}(X)>0$.
\end{proposition}

The higher derivatives of $\mathcal{E}(X_{t})$ have explicit formulas in terms
of the energy functionals of lower derivatives of $X_{t}$.

\begin{proposition} \label{p1.11}If $X_{t}$ is the solution of the flow (\ref{1.4}), then
\begin{equation}
\mathcal{E}''(X_{t})=4\mathcal{E}(\partial_{t}X_{t})\geq0.\label{1.27}
\end{equation}
\end{proposition}

\begin{proof} Using (\ref{1.11}), we have
\begin{eqnarray*}
\mathcal{E}''(X_{t})&=&-4\int_{M}\partial_{t}(X_{t})_{i}\cdot\partial_{t}
\left(\partial_{t}(X_{t})^{i}\right)dV\\
&=&-4\int_{M}\partial_{t}(X_{t})_{i}\cdot\partial_{t}
\left(\Delta_{{\rm LB}}(X_{t})^{i}+\nabla^{i}{\rm div}(X_{t})
+R^{i}{}_{j}(X_{t})^{j}\right)dV\\
&=&-4\int_{M}\partial_{t}(X_{t})_{i}\left(\Delta_{{\rm LB}}\partial_{t}(X_{t})^{i}
+\nabla^{i}{\rm div}(\partial_{t}X_{t})+R^{i}{}_{j}\partial_{t}(X_{t})^{j}\right)dV\\
&=&-4\int_{M}\left(\frac{1}{2}\Delta|\partial_{t}X_{t}|^{2}
-|\nabla\partial_{t}X_{t}|^{2}\right)dV\\
&&- \ 4\int_{M}\partial_{t}(X_{t})_{i}\left(\nabla^{i}{\rm div}(\partial_{t}X_{t})+R^{i}{}_{j}
\partial_{t}(X_{t})^{j}\right)dV\\
&=&4\int_{M}\left[|\nabla\partial_{t}X_{t}|^{2}+\nabla^{i}\partial_{t}(X_{t})_{i}
\cdot{\rm div}(\partial_{t}X_{t})-{\rm Ric}(\partial_{t}X_{t},\partial_{t}X_{t})\right]dV\\
&=&4\int_{M}|\left[\nabla\partial_{t}X_{t}|^{2}+\partial_{t}{\rm div}(X_{t})
\cdot{\rm div}(\partial_{t}X_{t})
-{\rm Ric}(\partial_{t}X_{t},\partial_{t}X_{t})\right]dV\\
&=&4\int_{M}\left[|\nabla\partial_{t}X_{t}|^{2}
+({\rm div}(\partial_{t}X_{t}))^{2}-{\rm Ric}(\partial_{t}X_{t},\partial_{t}X_{t})\right]dV\\
&=&4\mathcal{E}(\partial_{t}X_{t})
\end{eqnarray*}
which is nonnegative according to (\ref{1.7}).
\end{proof}

%%%%%%%%%%%%%%%%%%%%%%%%%%%%%%%%%%%%%%%%%%%%%%%%%%%%%%%%%%%%%%%%%%%%%%%%%%%%%%
\subsection{A connection to the Navier-Stokes equations}\label{subsection1.6}
%%%%%%%%%%%%%%%%%%%%%%%%%%%%%%%%%%%%%%%%%%%%%%%%%%%%%%%%%%%%%%%%%%%%%%%%%%%%%%

A surprising observation is that our flow (\ref{1.4}) is very close to
the Navier-Stokes equations \cite{CJW, T}(without the pressure) on manifolds
\begin{equation}
\partial_{t}X_{t}+\nabla_{X_{t}}X_{t}={\rm div}(S_{t}), \ \ \ {\rm div}(X_{t})=0,
\label{1.28}
\end{equation}
where $S_{t}:=2{\bf Def}(X_{t})$ is the stress tensor of $X_{t}$. By an
easy computation we can write (\ref{1.28}) as
\begin{equation}
\partial_{t}(X_{t})^{i}+\left(\nabla_{X_{t}}X_{t}\right)^{i}=\Delta
(X_{t})^{i}+\nabla^{i}{\rm div}(X_{t})+R^{i}{}_{j}X^{j}, \ \ \
{\rm div}(X_{t})=0.\label{1.29}
\end{equation}
Compared (\ref{1.4}) with (\ref{1.29}), we give a geometric interpolation
of the right (or the linear) part of the Navier-Stokes equations on manifolds.

When the Ricci tensor field is identically zero, our flow (\ref{1.4}) keeps the property that
${\rm div}(X_{t})=0$ (see (\ref{1.17})).

As a consequence of the non-negativity of $\mathcal{E}$ we can prove that

\begin{theorem} \label{t1.12}Suppose that $(M,g)$ is a closed and orientable Riemannian manifold. If $X_{t}$ is a solution of the Navier-Stokes equations (\ref{1.35}), then
\begin{equation}
\frac{d}{dt}\left(\int_{M}|X_{t}|^{2}dV\right)=-2\mathcal{E}(X_{t})
\leq0.\label{1.30}
\end{equation}
In particular
\begin{equation}
\int_{M}|X_{t}|^{2}dV\leq\int_{M}|X_{0}|^{2}dV.\label{1.31}
\end{equation}
\end{theorem}

\begin{proof} By multiplying by $(X_{t})_{i}$ the equation (\ref{1.29}) equals
\begin{equation*}
\frac{1}{2}\partial_{t}|X_{t}|^{2}+\left\langle\nabla_{X_{t}}X_{t},X_{t}\right\rangle
=\left\langle \Delta X_{t}+\nabla{\rm div}(X_{t})+{\rm Rc}^{\sharp}(X_{t}),X_{t}\right\rangle.
\end{equation*}
Integrating on both sides yields
\begin{equation*}
\frac{1}{2}\frac{d}{dt}\int_{M}|X_{t}|^{2}dV+\int_{M}\left\langle\nabla_{X_{t}}X_{t},X_{t}\right\rangle dV
=-\mathcal{E}(X_{t}).
\end{equation*}
From Lemma \ref{l1.13} below, we verify (\ref{1.30}) since ${\rm div}(X_{t})=0$.
\end{proof}

\begin{lemma} \label{l1.13} Suppose that $(M,g)$ is closed and oriented
Riemannian manifold. Then for any vector field $X\in C^{\infty}(M,TM)$, we have
\begin{equation}
\int_{M}\left\langle\nabla_{X}X,X\right\rangle dV=-\frac{1}{2}
\int_{M}{\rm div}(X)|X|^{2}dV.\label{1.32}
\end{equation}
\end{lemma}

\begin{proof} Indeed, using $\left(\nabla_{X}X\right)^{j}=X^{i}\nabla_{i}X^{j}$ we have
\begin{eqnarray*}
\int_{M}\left\langle\nabla_{X}X,X\right\rangle dV&=&\int_{M}
\left(\nabla_{X}X\right)^{j}X_{j}\!\ dV \ \ = \ \ \int_{M}X^{i}\nabla_{i}X^{j}
\cdot X_{j}\!\ dV\\
&=&\int_{M}\nabla_{i}X^{j}(X^{i}X_{j})\!\ dV \ \ = \ \ -\int_{M}X^{j}
\nabla_{i}(X^{i}X_{j})\!\ dV\\
&=&-\int_{M}X^{j}\left[{\rm div}(X)X_{j}+X^{i}\nabla_{i}X_{j}\right]dV\\
&=&-\int_{M}{\rm div}(X)|X|^{2}dV-\int_{M}X^{i}X^{j}\nabla_{i}X_{j}\!\ dV\\
&=&-\int_{M}{\rm div}(X)|X|^{2}dV-\int_{M}\left\langle\nabla_{X}X,X\right\rangle dV.
\end{eqnarray*}
Arranging the terms yields (\ref{1.32}).
\end{proof}

The similar result was considered by Wilson \cite{Wilson} for the standard 
metric on $\mathbf{R}^{3}$.

%%%%%%%%%%%%%%%%%%%%%%%%%%%%%%%%%%%%%%%%%%%%%%%%%%%%%%%%%%%%%%%%%%%%%%%%%%%%%%
\subsection{A connection to Kazdan-Warner-Bourguignon-Ezin identity}
\label{subsection1.7}
%%%%%%%%%%%%%%%%%%%%%%%%%%%%%%%%%%%%%%%%%%%%%%%%%%%%%%%%%%%%%%%%%%%%%%%%%%%%%%

If $(M,g)$ is a closed Riemannian manifold with $m\geq2$ and if $X$ is a Killing vector field, then
\begin{equation}
\int_{M}\langle\nabla R, X\rangle\!\ dV=0.\label{1.40}
\end{equation}
This identity (actually holds for any conformal Killing vector fields) was
proved by Bourguignon and Ezin \cite{BE} and the surface case is the
classical Kazdan-Warner identity \cite{KW}. For convenience, we call such an 
identity as {\it {\rm KWBE} identity}. For its application to Ricci flow we refer readers to \cite{CLN}. In this subsection we study the asymptotic behavior of the KWBE identity under the flow (\ref{1.4}).

For any vector field $X$, we define the {\it {\rm KWBE} functional} as
\begin{equation*}
\mathcal{I}(X):=\int_{M}\langle\nabla R, X\rangle\!\ dV.
\end{equation*}
Then, where $X_{t}=X^{i}\frac{\partial}{\partial x^{i}}$,
\begin{eqnarray*}
\frac{d}{dt}\mathcal{I}(X_{t})&=&\int_{M}\nabla_{i}R\left(\Delta X^{i}
+\nabla^{i}{\rm div}(X_{t})+R^{i}_{j}X^{j}\right)dV\\
&=&\int_{M}\nabla_{i}R\cdot\Delta X^{i}\!\ dV-\int_{M}\Delta R\cdot{\rm div}
(X_{t})\!\ dV
+\int_{M}R_{ij}X^{i}\nabla^{j}R\!\ dV.
\end{eqnarray*}
Using the commutative formula $\nabla\Delta R=\Delta\nabla R-{\rm Rc}(\nabla R,\cdot)$ yields
\begin{eqnarray*}
\int_{M}\nabla_{i}R\cdot\Delta X^{i}dV&=&\int_{M}\langle X_{t},\Delta\nabla 
R\rangle\!\ dV\\
&=&\int_{M}\langle X_{t},\nabla\Delta R+{\rm Rc}(\nabla R,\cdot)\rangle\!\ dV\\
&=&-\int_{M}\Delta R\cdot{\rm div}(X_{t})\!\ dV
+\int_{M}R_{ij}X^{i}\nabla^{j}R\!\ dV
\end{eqnarray*}
and therefore
\begin{equation}
\frac{d}{dt}\mathcal{I}(X_{t})=-2\int_{M}\Delta R\cdot{\rm div}(X_{t})\!\ dV
+2\int_{M}{\rm Rc}(X_{t},\nabla R)\!\ dV.\label{1.34}
\end{equation}
The last term on the right-hand side of (\ref{1.34}) can be simplified by
\begin{eqnarray*}
\int_{M}\nabla^{i}R\left(X^{j}R_{ij}\right)dV&=&-\int_{M}R\left(\nabla^{i}X^{j}\cdot R_{ij}+X^{j}\cdot\frac{1}{2}\nabla_{j}R\right)dV\\
&=&-\int_{M}RR_{ij}\nabla^{i}X^{j}\!\ dV-\frac{1}{2}\int_{M}RX^{j}\nabla_{j}R\!\ dV.
\end{eqnarray*}
We also have
\begin{eqnarray*}
\int_{M}RX^{j}\nabla_{j}R\!\ dV&=&-\int_{M}\nabla_{j}(RX^{j})R\!\ dV\\
&=&-\int_{M}RX^{j}\nabla_{j}R\!\ dV-\int_{M}R^{2}{\rm div}(X_{t})\!\ dV
\end{eqnarray*}
so that
\begin{equation}
\int_{M}RX^{j}\nabla_{j}R\!\ dV=-\frac{1}{2}\int_{M}R^{2}
{\rm div}(X_{t})dV.\label{1.35}
\end{equation}

From (\ref{1.34}), (\ref{1.35}), (\ref{1.1}) and Theorem \ref{t1.8}, we arrive at

\begin{proposition}\label{p1.14} If $(M,g)$ is a closed Riemannian manifold and $X_{t}$ is a solution to (\ref{1.4}), then
\begin{equation}
\frac{d}{dt}\mathcal{I}(X_{t})=2\int_{M}\left(-\Delta+\frac{R}{4}\right)
R\cdot{\rm div}(X_{t})\!\ dV-2\int_{M}R\left\langle{\rm Rc},{\bf Def}(X_{t})
\right\rangle dV.\label{1.36}
\end{equation}
In particular,
\begin{equation}
\lim_{t\to\infty}\frac{d}{dt}\mathcal{I}(X_{t})=0.\label{1.37}
\end{equation}
\end{proposition}

This proposition gives the limiting behavior of $\frac{d}{dt}\mathcal{I}(X_{t})$. However, the pointwise behavior of $\frac{d}{dt}\mathcal{I}(X_{t})$ is very complicated.
For example, we can find a compact Riemannian manifold such that
$\frac{d}{dt}\mathcal{I}(X_{t})>0$ or $<0$ depending on the choice
of the initial vector fields.

\begin{corollary}\label{c1.15} Suppose that $(M,g)$ is a closed $m$-dimensional
Einstein manifold with $m\geq3$.

\begin{itemize}

\item[(a)] When $m=4$ or the scalar curvature of $g$ vanishes identically,
$\frac{d}{dt}\mathcal{I}(X_{t})=0$ for all $t$, where $X_{t}$ is the solution
of (\ref{1.4}) with any given initial vector field $X$.

\item[(b)] When $m\neq 4$ and the scalar curvature of $g$ does not
vanish identically, there exists a vector field $X$ such that $\frac{d}{dt}\mathcal{I}(X_{t})>0$ for all $t$, where $X_{t}$
is the solution of (\ref{1.4}) with the initial vector field $X$.

\item[(c)] When $m\neq 4$ and the scalar curvature of $g$ does not
vanish identically, there exists a vector field $X$ such that $\frac{d}{dt}\mathcal{I}(X_{t})<0$ for all $t$, where $X_{t}$
is the solution of (\ref{1.4}) with the initial vector field $X$.

\end{itemize}

\end{corollary}

\begin{proof} By assumption we have ${\rm Ric}=\frac{R}{m}g$ and $R$
is constant. Using (\ref{1.36}) we obtain
\begin{eqnarray}
\frac{d}{dt}\mathcal{I}(X_{t})&=&
\int_{M}\frac{R^{2}}{2}\cdot{\rm div}(X_{t})\!\ dV
-2\int_{M}\frac{R^{2}}{m}{\rm div}(X_{t})\!\ dV\nonumber\\
&=&\int_{M}\frac{m-4}{2m}R^{2}\cdot{\rm div}(X_{t})\!\ dV.\label{1.38}
\end{eqnarray}
The part (a) follows immediately.

For parts (b) and (c), we may assume that
$m>4$, otherwise we can consider $-X_{t}$. According to the evolution
equation (\ref{1.16}) yields
\begin{equation*}
\partial_{t}{\rm div}(X_{t})=
2\Delta{\rm div}(X_{t})
+\frac{2R}{m}{\rm div}(X_{t})
\end{equation*}
which can be written as
\begin{equation*}
\partial_{t}\left(e^{-\frac{2R}{m}t}{\rm div}(X_{t})\right)
=2\Delta\left(e^{-\frac{2R}{m}t}{\rm div}(X_{t})\right).
\end{equation*}
By the maximum principle,
\begin{equation}
e^{\frac{2R}{m}t}\min_{M}{\rm div}(X)
\leq{\rm div}(X_{t})\leq e^{\frac{2R}{m}t}\max_{M}{\rm div}(X), \ \ \
X:=X_{0}.\label{1.39}
\end{equation}
Given any fixed vector field $X'$, let $f$
be a smooth function on $M$ satisfying $\Delta f={\rm div}(X')-1$. Then
\begin{equation*}
{\rm div}(X'-\nabla f)={\rm div}(X')-\Delta f=1>0
\end{equation*}
on $M$. Choose $X:=X'-\nabla f$. Then
\begin{equation*}
{\rm div}(X_{t})\geq e^{\frac{2R}{m}t}>0 \ \ \ \text{on} \ M
\end{equation*}
for all $t$. Substituting this into (\ref{1.38}) we arrive at
\begin{equation*}
\frac{d}{dt}\mathcal{I}(X_{t})
\geq\frac{m-4}{2m}e^{\frac{2R}{m}t}\int_{M}R^{2}\!\ dV
\end{equation*}
where we used $m>4$. Since the scalar curvature $R$ does not vanishes
identically, the $L^{2}$-norm of $R$ must be positive and consequently,
$\frac{d}{dt}\mathcal{I}(X_{t})>0$ for all $t$.

Similarly, we can prove part (c).
\end{proof}

%%%%%%%%%%%%%%%%%%%%%%%%%%%%%%%%%%%%%%%%%%%%%%%%%%%%%%%%%%%%%%%%%%%%%%%%%%%%%%
\section{A conjecture to the flow and its application}\label{section2}
%%%%%%%%%%%%%%%%%%%%%%%%%%%%%%%%%%%%%%%%%%%%%%%%%%%%%%%%%%%%%%%%%%%%%%%%%%%%%%

Before stating a conjecture to the flow (\ref{1.4}), we shall look at a simple case
that $(M,g)$ is an Einstein manifold with positive sectional curvature and the
solution of (\ref{1.4}) is the sum of the initial vector field and a gradient
vector field. That is, we assume
\begin{equation*}
R_{ij}=\frac{R}{m}g_{ij}, \ \ \ m\geq3, \ \ \
X_{t}=X+\nabla f_{t},
\end{equation*}
where $f_{t}$ are some functions on $M$. By a theorem of Schur, the scalar curvature $R$ must be a constant. In this
case the flow (\ref{1.4}) is equivalent to
\begin{equation}
\nabla\left(\partial_{t}f_{t}-2\Delta f_{t}-\frac{2R}{m}f_{t}\right)
=X^{\dag},\label{2.1}
\end{equation}
where
\begin{equation}
X^{\dag}:=
\Delta X+\nabla({\rm div}(X))+{\rm Ric}^{\sharp}(X)\label{2.2}
\end{equation}
is the vector field associated to $X$. Clearly that the operator $\dag$ is not
self-adjoint on the space of vector fields, with respect to the $L^{2}$-inner product with respect to $(M,g)$.

%%%%%%%%%%%%%%%%%%%%%%%%%%%%%%%%%%%%%%%%%%%%%%%%%%%%%%%%%%%%%%%%%%%%%%%%%%%%%%
\subsection{Einstein manifolds with positive scalar curvature}
%%%%%%%%%%%%%%%%%%%%%%%%%%%%%%%%%%%%%%%%%%%%%%%%%%%%%%%%%%%%%%%%%%%%%%%%%%%%%%

If $(M,g)$ is an $m$-dimensional Einstein manifold with positive scalar curvature,
then we can prove that the limit vector field converges to a nonzero Killing
vector field, provided the initial vector field satisfying some conditions. We
first give a $L^{2}$-estimate for $f_{t}$.

\begin{proposition} \label{p2.1}Suppose that $(M,g)$ is an $m$-dimensional closed and orientable
Einstein manifold with positive scalar curvature $R$, where $m\geq3$. Let $X$ be a nonzero vector field satisfying $X^{\dag}=\nabla\varphi_{X}$ for
some smooth function $\varphi_{X}$ on $M$. Then for any given constant $c$, the equation
\begin{equation}
\partial_{t}f_{t}=2\Delta f_{t}
+\frac{2R}{m}f_{t}+\varphi_{X}, \ \ \ f_{0}=c,\label{2.3}
\end{equation}
exists for all time. Moreover,
\begin{itemize}

\item[(i)] we have
\begin{equation}
\int_{M}f_{t}\!\ dV=\left[c\cdot {\rm Vol}(M,g)+\frac{m}{2R}\int_{M}
\varphi_{X}\!\ dV\right]e^{\frac{2R}{m}t}
-\frac{m}{2R}\int_{M}\varphi_{X}\!\ dV.\label{2.4}
\end{equation}
Setting
\begin{equation*}
c_{X}:=-\frac{m}{2R\cdot{\rm Vol}(M,g)}\int_{M}\varphi_{X}\!\ dV,
\end{equation*}
yields
\begin{equation*}
\int_{M}f_{t}\!\ dV=-\frac{m}{2R}\int_{M}\varphi_{X}\!\ dV, \ \ \ \text{if} \ c=c_{X}.
\end{equation*}

\item[(ii)] if we choose the nonzero function $\varphi_{X}$ so that its integral over $M$ is zero and $f_{0}=0$, then $\int_{M}f_{t}\!\ dV=0$ and the $L^{2}$-norm of $f_{t}$ is bounded by
\begin{equation}
||f_{t}||_{2}\leq\frac{||\varphi_{X}||_{2}}{2
\left(\lambda_{1}-\frac{R}{m}\right)}
-\frac{||\varphi_{X}||_{2}}{2
\left(\lambda_{1}-\frac{R}{m}\right)}
e^{-2\left(\lambda_{1}
-\frac{R}{m}\right)t},\label{2.5}
\end{equation}
where $||\cdot||_{2}$ means $||\cdot||_{L^{2}(M,g)}$ the $L^{2}$-norm with
respect to $(M,g)$, and $\lambda_{1}$ stands for the first nonzero eigenvalue of $(M,g)$.

\end{itemize}
\end{proposition}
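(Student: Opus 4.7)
The plan is to exploit the linear parabolic structure of \eqref{2.3} and reduce everything to a family of uncoupled scalar ODEs via spectral decomposition of the Laplace--Beltrami operator. Let $\{\phi_k\}_{k\ge 0}$ be an $L^2(M,g)$-orthonormal basis of eigenfunctions of $-\Delta_{{\rm LB}}$, with eigenvalues $0=\lambda_0<\lambda_1\le\lambda_2\le\cdots$ and $\phi_0\equiv{\rm Vol}(M,g)^{-1/2}$. Expanding $f_t=\sum_{k\ge 0}a_k(t)\phi_k$ and $\varphi_X=\sum_{k\ge 0}b_k\phi_k$, the equation \eqref{2.3} decouples into the family
\begin{equation*}
a_k'(t)=2\bigl(R/m-\lambda_k\bigr)a_k(t)+b_k,\qquad a_k(0)=c\,\sqrt{{\rm Vol}(M,g)}\,\delta_{k 0}.
\end{equation*}
Each ODE is globally solvable, and standard elliptic/parabolic regularity applied to the resulting series yields a smooth global $f_t$, giving the long-time existence assertion.

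For part (i), I would simply integrate \eqref{2.3} over $M$ and use $\int_M\Delta_{{\rm LB}}f_t\,dV=0$. Setting $F(t):=\int_M f_t\,dV$ reduces the PDE to the first-order linear ODE
\begin{equation*}
F'(t)=\frac{2R}{m}F(t)+\int_M\varphi_X\,dV,\qquad F(0)=c\cdot{\rm Vol}(M,g),
\end{equation*}
whose explicit solution is precisely \eqref{2.4}. The special value $c=c_X$ is chosen so that the coefficient of the growing exponential $e^{2Rt/m}$ vanishes identically, leaving only the stationary term.

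For part (ii) the decisive input is the Lichnerowicz eigenvalue estimate: since ${\rm Ric}=(R/m)g$ with $R>0$, one has $\lambda_1\ge R/(m-1)>R/m$, so $\lambda_k-R/m\ge\lambda_1-R/m>0$ for every $k\ge 1$. Integrating the ODE for $k\ge 1$ explicitly,
\begin{equation*}
a_k(t)=\frac{b_k}{2(\lambda_k-R/m)}+\left(a_k(0)-\frac{b_k}{2(\lambda_k-R/m)}\right)e^{-2(\lambda_k-R/m)t},
\end{equation*}
then combining the triangle inequality with the uniform bound $1/(2(\lambda_k-R/m))\le 1/(2(\lambda_1-R/m))$ and summing via Parseval controls the non-constant part of $f_t$ by the right-hand side of \eqref{2.5}; the zero-mode contribution is handled by the formula of part (i), whose choice $c=c_X$ eliminates the lone exponentially growing term.

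The main obstacle is precisely this $k=0$ mode. Without the strict inequality $\lambda_1>R/m$, which is a structural consequence of the Einstein condition together with $R>0$ and fails in general, the higher modes would not decay uniformly; and without the cancellation engineered in part (i), the $e^{2Rt/m}$ growth of $a_0(t)$ would destroy any hope of a uniform-in-$t$ $L^2$ bound. The whole argument therefore hinges on aligning these two ingredients—the Lichnerowicz gap for $k\ge 1$ and the constant $c_X$ for $k=0$—so that they act on the same spectral decomposition.
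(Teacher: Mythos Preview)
Your argument for part~(i) is exactly the paper's: integrate \eqref{2.3} over $M$, reduce to a linear ODE for $F(t)=\int_M f_t\,dV$, and solve. For part~(ii), however, you take a genuinely different route. The paper works with the energy $b(t)=\int_M|f_t|^2\,dV$ directly: differentiating in $t$ and using \eqref{2.3} gives
\[
b'(t)=-4\int_M|\nabla f_t|^2\,dV+\frac{4R}{m}\,b(t)+2\int_M f_t\,\varphi_X\,dV,
\]
and then the Poincar\'e inequality together with Cauchy--Schwarz yields the differential inequality $b'(t)\le-4(\lambda_1-R/m)\,b(t)+2\,b(t)^{1/2}\|\varphi_X\|_2$, which integrates to \eqref{2.5}. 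You instead diagonalise: expand in eigenfunctions of $-\Delta_{\rm LB}$, solve each scalar ODE explicitly, and reassemble with Parseval. Both arguments pivot on the same fact---Lichnerowicz gives $\lambda_1\ge R/(m-1)>R/m$, so every nonzero mode decays---but your spectral version makes the role of the zero mode completely transparent (it is the unique exponentially growing direction, killed precisely by the choice $c=c_X$), whereas the paper's energy method bundles all modes together and leaves the mean-zero hypothesis in the Poincar\'e step implicit. The energy method is shorter and more robust to perturbations of the operator; your approach is more explicit and makes the sharpness of the constants and the necessity of $c=c_X$ for a uniform bound easier to read off.
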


For a moment, we put
\begin{equation*}
a(t):=\int_{M}f_{t}\!\ dV, \ \ \ b(t):=\int_{M}|f_{t}|^{2}dV.
\end{equation*}
Then, the equation (\ref{2.3}) implies that
\begin{equation*}
a'(t)=\frac{2R}{m}a(t)+\int_{M}\varphi_{X}\!\ dV,
\end{equation*}
and
\begin{eqnarray*}
b'(t)&=&-4\int_{M}|\nabla f_{t}|^{2}dV+\frac{4R}{m}b(t)+2\int_{M}f_{t}
\varphi_{X}\!\ dV\\
&\leq&-4\left(\lambda_{1}-\frac{R}{m}\right)b(t)+2b^{1/2}(t)||\varphi_{X}
||_{2}.
\end{eqnarray*}
By a theorem
of Lichnerowicz, we have that $\lambda_{1}\geq\frac{R}{m-1}>\frac{R}{m}$. Hence
(\ref{2.3}) and (\ref{2.4}) follow immediately.

Consequently, we have the following

\begin{theorem} \label{t2.2}Suppose that $(M,g)$ is an $m$-dimensional closed
and orientable Einstein manifold with positive scalar curvature $R$, where $m\geq3$. Let $X$ be a nonzero vector field satisfying the following two conditions:
\begin{itemize}

\item[(i)] $X^{\dag}$ is a gradient vector field, and

\item[(ii)] $X$ is not a gradient vector field.

\end{itemize}
Then the flow (\ref{1.4}) with initial value $X$ converges uniformly
to a nonzero Killing vector field.
\end{theorem}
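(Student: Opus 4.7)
The plan is to exploit the Einstein condition together with hypothesis (i) to reduce the vector-valued flow (\ref{1.4}) to the scalar parabolic equation (\ref{2.3}) already handled by Proposition \ref{p2.1}, and then to invoke Theorem \ref{t1.2} and hypothesis (ii) to obtain a \emph{nonzero} Killing limit.

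\textbf{Step 1 (scalar reduction).} Since ${\rm Ric}^{\sharp}=(R/m)\,\mathrm{Id}$, the ansatz $X_{t}=X+\nabla f_{t}$ converts the flow (\ref{1.4}) into
\begin{equation*}
\nabla\Big(\partial_{t}f_{t}-2\Delta_{{\rm LB}}f_{t}-\tfrac{2R}{m}f_{t}\Big)=X^{\dag},
\end{equation*}
exactly as in the derivation of (\ref{2.1}). By hypothesis (i), $X^{\dag}=\nabla\varphi_{X}$, so it is enough to solve (\ref{2.3}). I choose $f_{0}=c_{X}$, so that by Proposition \ref{p2.1}(i) the mean of $f_{t}$ stays locked at its equilibrium value, while Proposition \ref{p2.1}(ii) -- whose proof uses the Lichnerowicz inequality $\lambda_{1}>R/m$ -- provides a uniform $L^{2}$ bound on $f_{t}$ for all $t\geq 0$.

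\textbf{Step 2 (convergence in the flow).} By construction $X_{t}:=X+\nabla f_{t}$ is a smooth global solution of (\ref{1.4}) with $X_{0}=X$, so by the uniqueness asserted in Theorem \ref{t1.2} it is \emph{the} canonical solution of that flow. Theorem \ref{t1.2} then provides a Killing limit $X_{\infty}$ with $X_{t}\to X_{\infty}$ uniformly. Standard parabolic regularity applied to (\ref{2.3}), combined with the $L^{2}$ bound (\ref{2.5}) and the spectral gap $\lambda_{k}-R/m>0$ on mean-zero modes, upgrades this to smooth convergence $f_{t}\to f_{\infty}$, so that $X_{\infty}=X+\nabla f_{\infty}$.

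\textbf{Step 3 (nontriviality).} If $X_{\infty}=0$, then $X=-\nabla f_{\infty}$ would be a gradient vector field, directly contradicting hypothesis (ii). Therefore $X_{\infty}$ is a nonzero Killing vector field, as claimed.

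\textbf{Main obstacle.} The delicate point is in step 2: the operator $2\Delta_{{\rm LB}}+\frac{2R}{m}$ has the positive eigenvalue $\frac{2R}{m}$ on constants, so without the precise initial value $c=c_{X}$ from Proposition \ref{p2.1} the mean of $f_{t}$ would grow exponentially and destroy the uniform $L^{2}$ bound. Having neutralized that unstable mode, one must still promote $L^{2}$ control to smooth convergence so that the limit $X_{\infty}-X$ is genuinely $\nabla f_{\infty}$ for a \emph{smooth} function $f_{\infty}$. Once this is done, step 3 is immediate, and hypothesis (ii) is precisely what prevents Theorem \ref{t1.2} from collapsing to the trivial Killing field.
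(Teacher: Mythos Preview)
Your argument is correct and follows the route the paper itself sets up: the ansatz $X_{t}=X+\nabla f_{t}$ and Proposition~\ref{p2.1} are introduced precisely so that Theorem~\ref{t2.2} can be read off ``consequently,'' and your Steps~1--3 spell out exactly that deduction, with Step~3 making explicit how hypothesis~(ii) rules out the trivial limit. The only point the paper leaves tacit and you rightly flag is the passage from the $L^{2}$ bound on $f_{t}$ to smooth convergence; your use of the Lichnerowicz gap $\lambda_{1}>R/m$ on mean-zero modes (after fixing $c=c_{X}$ to kill the unstable constant mode) together with parabolic regularity is the standard way to close this, and it suffices.
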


%%%%%%%%%%%%%%%%%%%%%%%%%%%%%%%%%%%%%%%%%%%%%%%%%%%%%%%%%%%%%%%%%%%%%%%%%%%%%%
\subsection{A conjecture and its applications}
%%%%%%%%%%%%%%%%%%%%%%%%%%%%%%%%%%%%%%%%%%%%%%%%%%%%%%%%%%%%%%%%%%%%%%%%%%%%%%

By Bochner's theorem, any Killing vector field on a closed and orientable Riemmanian
manifold with negative Ricci curvature is trivial. Hence, based on a result
in the Einstein case, we propose the following
conjecture.

\begin{conjecture} \label{c2.3}Suppose that $M$ is a closed Riemannian manifold with positive sectional curvature. For some initial vector field and a certain Riemannian metric $g$ of positive sectional curvature, the
flow (\ref{1.4}) converges uniformly to a nonzero Killing vector field with respect to $g$.
\end{conjecture}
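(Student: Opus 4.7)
The plan is to reduce the conjecture to Theorem~\ref{t2.2} by working on a symmetric background, and to exhibit explicit initial data exploiting the Killing fields of a symmetric metric.

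First, I would restrict to the classical examples, namely the compact rank-one symmetric spaces $M \in \{S^{n}, \mathbb{CP}^{n}, \mathbb{HP}^{n}, \mathbb{OP}^{2}\}$ equipped with their standard symmetric metric $g$. On each of these, $g$ is simultaneously Einstein with positive scalar curvature $R > 0$ (so Theorem~\ref{t2.2} is available once $m \geq 3$) and has positive sectional curvature, which places $g$ exactly in the class of metrics the conjecture permits.

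Second, I would construct a nonzero initial vector field $X$ satisfying conditions (i)--(ii) of Theorem~\ref{t2.2}. The key algebraic fact is that the operator $\dagger$ defined in~(\ref{2.2}) preserves gradients in the Einstein setting: for $Y = \nabla h$, a direct application of the Bochner--Weitzenb\"ock identity for $1$-forms together with $\mathrm{Ric}^{\sharp} = (R/m)\,\mathrm{Id}$ gives
\[
(\nabla h)^{\dag} \;=\; 2\,\nabla\!\left(\Delta_{\mathrm{LB}} h\right) + \frac{2R}{m}\,\nabla h,
\]
which is itself a gradient. On the other hand, for any Killing field $K$ one has $K^{\dag} = 0$ directly from Yano's equation~(\ref{1.6}). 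Since the isometry group of a rank-one symmetric space is positive-dimensional, nontrivial Killing fields $K$ exist; and by Bochner's theorem, on a closed manifold of positive Ricci curvature, no nonzero Killing field is a gradient. Hence the choice $X := K + \nabla h$ with $K \neq 0$ and $h$ any nonconstant smooth function has $X^{\dag} = (\nabla h)^{\dag}$ a gradient (condition (i)) and is itself not a gradient (condition (ii), as the Killing and gradient summands are linearly independent in the Hodge-type decomposition). Theorem~\ref{t2.2} then yields uniform convergence of the flow~(\ref{1.4}) to a nonzero Killing vector field.

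The main obstacle is extending this to arbitrary closed manifolds admitting positive sectional curvature, in particular to the exotic examples (certain Aloff--Wallach, Eschenburg, and Bazaikin spaces) where the canonical homogeneous metrics of positive sectional curvature are not known to be Einstein. In such cases Theorem~\ref{t2.2} does not apply directly, so a different tool is needed. The natural fallback is a spectral approach: decompose $L^{2}(M, TM)$ into eigenspaces of the self-adjoint linear operator $X \mapsto -(\Delta_{\mathrm{LB}} X + \nabla\,\mathrm{div}(X) + \mathrm{Ric}^{\sharp}(X))$ governing the flow, identify its kernel with the space of Killing vector fields (via Watanabe's characterization and equation~(\ref{1.3})), and choose initial data whose projection onto that kernel is nonzero; combined with Theorem~\ref{t1.2} and the criterion $\mathbf{Err}(X) > 0$ of Proposition~\ref{p1.4}, this should pin down a nonzero Killing limit whenever the kernel is nontrivial, which is ensured by a positive-dimensional isometry group on the chosen metric.
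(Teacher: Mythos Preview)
The statement you are addressing is explicitly labeled a \emph{conjecture} in the paper and is not proved there; the paper itself observes that its resolution would immediately answer Yau's long-standing Question~\ref{q2.4} on the existence of effective $\mathbb{S}^{1}$-actions on positively curved closed manifolds. There is therefore no proof in the paper to compare against.

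More importantly, your proposal is circular. In the first part you take as initial data $X = K + \nabla h$ with $K$ a nonzero Killing field already in hand; in the second (spectral) part you require the chosen metric $g$ to have positive-dimensional isometry group, i.e.\ again a nonzero Killing field. But the existence of a nonzero Killing vector field for \emph{some} positively curved metric on $M$ is exactly the content of the conjecture (and of Yau's question). Once such a $K$ is granted there is nothing left to prove: the constant solution $X_{t}\equiv K$ already satisfies~(\ref{1.4}) by Yano's equation~(\ref{1.6}) and trivially ``converges'' to the nonzero Killing field $K$. Your construction thus verifies the conjecture only on manifolds where it is already known to hold --- the CROSSes, the homogeneous and biquotient examples --- and says nothing about the genuinely open case of an arbitrary closed manifold of positive sectional curvature whose isometry group might, a priori, be finite for every positively curved metric. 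The technical checks surrounding Theorem~\ref{t2.2} (that $K^{\dag}=0$, that $(\nabla h)^{\dag}$ is a gradient in the Einstein setting, and that $K+\nabla h$ is not a gradient) are correct, but they do not advance the conjecture.
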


Our study shows that we may need to change to a new metric, which
still has positive sectional curvature, to get the nonzero limit
which is a Killing vector field with respect to this new metric. For
this purpose we have computed variations of the functional $\mathfrak{L}$ or
$\mathcal{E}$ relative to the new metric, as well as the Perelman-type
functional for our flow.

Obviously a solution of this conjecture immediately answers the following
long-standing question of Yau \cite{SY}.

\begin{question} \label{q2.4}Does there exist an effective $\mathbb{S}^{1}$-action
on a closed manifold with positive sectional curvature?
\end{question}

Assuming Conjecture \ref{c2.3}, we can deduce several important corollaries. We
first recall the well-known Hopf's conjectures.

\begin{conjecture} \label{c2.5} If $M$ is a closed and even dimensional
Riemannian manifold with positive sectional curvature, then the Euler
characteristic number of $M$ is positive, i.e., $\chi(M)>0$.
\end{conjecture}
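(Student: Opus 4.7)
The plan is to prove Conjecture \ref{c2.5} by induction on the even dimension $n\geq 2$, using Conjecture \ref{c2.3} at each stage to produce a nonzero Killing vector field and then reducing to a lower-dimensional closed positively curved manifold via its zero set. The base case $n=2$ is immediate from Gauss--Bonnet: in dimension two, positive sectional curvature is positive Gaussian curvature $K>0$, so
\begin{equation*}
\chi(M)=\frac{1}{2\pi}\int_{M}K\,dA>0.
\end{equation*}

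For the inductive step, suppose the conjecture has been established in every even dimension strictly less than $n\geq 4$, and let $M^{n}$ be closed and admit some metric of positive sectional curvature. First I would invoke Conjecture \ref{c2.3} to replace the metric, if necessary, by a positively curved $g$ admitting a nonzero Killing vector field $X$; this replacement is harmless since $\chi(M)$ is a topological invariant. The closure of the one-parameter subgroup $\{\varphi^{X}_{t}\}\subset\mathrm{Isom}(M,g)$ is a nontrivial compact torus $T$, and I would pick any closed circle subgroup $S^{1}\subset T$ and study its isometric action on $M$. By Berger's theorem---on an even-dimensional closed manifold of positive sectional curvature, every Killing vector field has a zero---the fixed set $Z:=M^{S^{1}}$ is nonempty.

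Next I would exploit the structure of $Z$. Because the $S^{1}$-action is isometric, each connected component $N_{i}\subset Z$ is a closed totally geodesic submanifold whose normal representation at each point decomposes into nontrivial rotation summands, so $N_{i}$ has even codimension in $M$ and thus even dimension $\leq n-2$; and because $N_{i}$ is totally geodesic in the positively curved $(M,g)$, the Gauss equation shows $N_{i}$ inherits positive sectional curvature. I would then apply the equivariant identity
\begin{equation*}
\chi(M)\;=\;\chi(M^{S^{1}})\;=\;\sum_{i}\chi(N_{i}),
\end{equation*}
a standard consequence of the Lefschetz fixed-point theorem for the smooth $S^{1}$-action. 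Each positive-dimensional $N_{i}$ satisfies $\chi(N_{i})>0$ by the induction hypothesis, and each $0$-dimensional component contributes $+1$; since $Z$ is nonempty the sum is strictly positive, giving $\chi(M)>0$.

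The hardest step is, unsurprisingly, the leap enabled by Conjecture \ref{c2.3}; once a nonzero Killing vector field is granted, the remaining ingredients are classical. Among them, the identity $\chi(M)=\chi(M^{S^{1}})$ for an isometric circle action and the even-codimensionality of each component of the fixed set must be invoked with some care---in particular because the flow $\{\varphi^{X}_{t}\}$ need not itself close to a circle, which is why I would pass to the closure torus $T$ and choose a rational sub-circle $S^{1}\subset T$. Berger's theorem is the only step that genuinely uses positive sectional curvature of $M$ beyond the induction on the totally geodesic fixed set, and it is precisely what guarantees that $Z$ is nonempty, so that the induction truly reduces dimension.
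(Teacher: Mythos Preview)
Your proposal is correct and follows essentially the same route as the paper: both argue conditionally on Conjecture~\ref{c2.3}, produce a nonzero Killing vector field on a positively curved metric, pass to its (nonempty, totally geodesic, even-dimensional, positively curved) zero set, use $\chi(M)=\sum_{i}\chi(M_{i})$, and conclude by induction. Your write-up is simply more explicit---spelling out the Gauss--Bonnet base case, the passage to a circle subgroup of the closure torus, Berger's theorem, and the Lefschetz identity---where the paper compresses all of this into a citation of Kobayashi \cite{K}.
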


\begin{conjecture} \label{c2.6} On $\mathbb{S}^{2}\times\mathbb{S}^{2}$
there is no Riemannian metric with positive sectional curvature.
\end{conjecture}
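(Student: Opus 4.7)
\medskip

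\noindent\emph{Proof proposal.} The plan is to argue by contradiction, using Conjecture \ref{c2.3} to manufacture an effective isometric $\mathbb{S}^{1}$-action on a hypothetical positively curved metric on $\mathbb{S}^{2}\times\mathbb{S}^{2}$, and then to rule out such an action by a known rigidity theorem for positively curved $4$-manifolds with symmetry.

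Suppose, for contradiction, that $\mathbb{S}^{2}\times\mathbb{S}^{2}$ admits a metric of positive sectional curvature. Since positive sectional curvature is an open condition, Conjecture \ref{c2.3} is applicable on this manifold: there exist a positively curved metric $g$ on $\mathbb{S}^{2}\times\mathbb{S}^{2}$ and a nonzero initial vector field $X$ such that the flow (\ref{1.4}) converges uniformly to a nonzero Killing vector field $Y_{\infty}$ of $g$, the nonvanishing being guaranteed by $\mathbf{Err}(X)>0$ in the sense of Proposition \ref{p1.4}. Since the manifold is closed, the isometry group $\mathrm{Isom}(\mathbb{S}^{2}\times\mathbb{S}^{2},g)$ is a compact Lie group and the one-parameter subgroup generated by $Y_{\infty}$ has compact closure, which is a nontrivial torus $T^{k}$ with $k\geq 1$; any such torus contains an $\mathbb{S}^{1}$ subgroup acting effectively by isometries. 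In particular, this already answers Question \ref{q2.4} affirmatively in this hypothetical case.

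Next, I would invoke the classification theorem of Hsiang and Kleiner for positively curved $4$-manifolds with symmetry: any closed, simply connected Riemannian $4$-manifold of positive sectional curvature admitting an effective isometric $\mathbb{S}^{1}$-action is homeomorphic to $\mathbb{S}^{4}$ or $\mathbb{CP}^{2}$. Comparing Euler characteristics, $\chi(\mathbb{S}^{4})=2$ and $\chi(\mathbb{CP}^{2})=3$, whereas $\chi(\mathbb{S}^{2}\times\mathbb{S}^{2})=4$, so $\mathbb{S}^{2}\times\mathbb{S}^{2}$ is homeomorphic to neither. This yields the desired contradiction.

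The main obstacle is entirely concentrated in Conjecture \ref{c2.3} itself; everything afterwards is either classical (compactness of the isometry group, torus structure of its closure) or an application of the Hsiang--Kleiner classification. Within the conjecture, the delicate analytic point is ensuring that the limit $X_{\infty}$ is nonzero: by Proposition \ref{p1.4} this reduces to exhibiting an initial $X$ with $\mathbf{Err}(X)>0$, and the currently available tools (the monotonicity of $\mathcal{E}$, the uniform $H^{\ell}$-bounds used in Theorem \ref{t1.2}, and the derivative estimates of Theorem \ref{t1.5}) only deliver existence and regularity of the limit, not its nontriviality. Extracting quantitative use of positive sectional curvature to prevent the full $L^{2}$-mass of $X$ from being consumed by $\int_{0}^{\infty}\mathcal{E}(X_{t})\,dt$ appears to be the essential difficulty to be resolved in \cite{LL1}.
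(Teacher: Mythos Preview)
Your proposal is correct and follows essentially the same route as the paper: assume Conjecture~\ref{c2.3} to produce a nonzero Killing vector field on the putative positively curved $\mathbb{S}^{2}\times\mathbb{S}^{2}$, then invoke the Hsiang--Kleiner classification \cite{HK} to reach a contradiction. Your added details (passing to the torus closure in the isometry group, and the Euler-characteristic check distinguishing $\mathbb{S}^{2}\times\mathbb{S}^{2}$ from $\mathbb{S}^{4}$ and $\mathbb{CP}^{2}$) are sound but not strictly necessary, since the paper quotes Hsiang--Kleiner directly in the form ``nonzero Killing vector field'' rather than ``effective $\mathbb{S}^{1}$-action''; your closing paragraph correctly isolates Conjecture~\ref{c2.3} as the sole unproved input.
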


For the recent development of Hopf's conjectures, we refer to \cite{P, SY}. A simple argument shows that Conjecture \ref{c2.5} and \ref{c2.6} follow
from Conjecture \ref{c2.3}.

\begin{corollary} \label{c.2.7}
Conjecture \ref{c2.3} implies Conjecture \ref{c2.5}.
\end{corollary}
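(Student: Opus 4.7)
The plan is to prove Conjecture \ref{c2.5} by induction on $\dim M$, using Conjecture \ref{c2.3} to supply a nontrivial Killing vector field at each step and then combining it with two classical facts about Killing fields on positively curved manifolds. Write $\dim M = 2n$. The base cases $n=0$ and $n=1$ are immediate: a finite set of points has positive Euler characteristic, and a closed surface of positive Gauss curvature has $\chi>0$ by Gauss--Bonnet. For the inductive step assume $n\geq 2$ and that the conclusion holds in all strictly smaller even dimensions.

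Apply Conjecture \ref{c2.3} to $M$ to obtain a Riemannian metric $g$ of positive sectional curvature on $M$ and a nonzero Killing vector field $X$ on $(M,g)$. I would then invoke two classical results. First, Berger's theorem: on a closed even-dimensional Riemannian manifold of positive sectional curvature every Killing field has a zero, so $Z(X)\neq\emptyset$. Second, Kobayashi's theorem on zero sets of Killing fields: $Z(X)$ is a finite disjoint union of closed totally geodesic submanifolds $Z_i\subset(M,g)$ of even codimension, and
\begin{equation*}
\chi(M)=\chi(Z(X))=\sum_{i}\chi(Z_i).
\end{equation*}
Since $X\not\equiv 0$, each $Z_i$ has codimension at least two, so $\dim Z_i\leq 2n-2$ is even and strictly smaller than $\dim M$. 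Because $Z_i$ is totally geodesic in $(M,g)$, the Gauss equation gives it induced positive sectional curvature. The inductive hypothesis then yields $\chi(Z_i)>0$ for every $i$, and since $Z(X)$ is nonempty we conclude $\chi(M)=\sum_i\chi(Z_i)>0$.

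The main obstacle in this plan is not the geometric argument, which is essentially routine given Berger's and Kobayashi's theorems, but the structural point that the induction relies on Conjecture \ref{c2.3} holding in \emph{every} even dimension simultaneously: each lower-dimensional totally geodesic component $Z_i$ is itself a closed positively curved manifold of strictly smaller even dimension, and one re-applies Conjecture \ref{c2.3} to it in order to produce the Killing field needed at the next stage of the induction. A secondary technical point worth checking is that the metric produced by Conjecture \ref{c2.3} on $M$ need have no compatibility with an a priori given metric, which is harmless here since the conclusion $\chi(M)>0$ is topological. Once these are granted, the deduction is short and direct.
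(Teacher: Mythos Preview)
Your proof is correct and follows essentially the same route as the paper's: obtain a nonzero Killing field from Conjecture \ref{c2.3}, use that its zero set is a nonempty finite union of totally geodesic even-dimensional submanifolds with $\chi(M)=\sum_i\chi(Z_i)$, and induct on dimension. Your write-up is in fact more explicit than the paper's sketch (separating out Berger's theorem, handling base cases, and flagging that Conjecture \ref{c2.3} must be invoked anew on each $Z_i$ with a possibly different metric), but the underlying argument is the same.
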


\begin{proof} From \cite{K} we know that the Killing vector field $X$ must have
zero, and the zero sets consist of finite number of totally geodesic
submanifolds $\{M_{i}\}$ of $M$ with the induced Riemannian metrics. Moreover each $M_{i}$ is even dimensional and has positive sectional curvature. Hence
we have $\chi(M)=\sum_{i}\chi(M_{i})$. By induction, we obtain $\chi(M)>0$.
\end{proof}

Hsiang and Kleiner \cite{HK} showed that if $M$ is a $4$-dimensional closed
Riemannian manifold with positive sectional curvature, admitting
a nonzero Killing vector field, then $M$ is homeomorphic to $\mathbb{S}^{4}$
or $\mathbb{CP}^{2}$. Consequently, $\mathbb{S}^{2}\times\mathbb{S}^{2}$ does
not admit a Riemannian metric, whose sectional curvature is positive, with a
nontrivial Killing vector field. Therefore

\begin{corollary} \label{c2.8} Conjecture \ref{c2.3} implies Conjecture \ref{c2.6}.
\end{corollary}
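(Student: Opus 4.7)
The plan is to proceed by contradiction, leveraging the Hsiang--Kleiner classification \cite{HK} that is quoted in the paragraph preceding the corollary. Suppose Conjecture \ref{c2.6} fails, so that $\mathbb{S}^{2}\times\mathbb{S}^{2}$ carries some Riemannian metric $g_{0}$ with positive sectional curvature. Applying Conjecture \ref{c2.3} to the closed manifold $M=\mathbb{S}^{2}\times\mathbb{S}^{2}$, we obtain (possibly after passing to a different metric $g$ of positive sectional curvature, as allowed by the statement of Conjecture \ref{c2.3}) some initial vector field $X$ such that the flow \eqref{1.4} driven by $g$ and starting from $X$ converges uniformly to a nonzero Killing vector field $X_{\infty}$ for $g$. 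In particular, $(\mathbb{S}^{2}\times\mathbb{S}^{2},g)$ is a $4$-dimensional closed Riemannian manifold of positive sectional curvature carrying a nontrivial Killing field.

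Next I would invoke the Hsiang--Kleiner theorem: any such $4$-manifold is homeomorphic to $\mathbb{S}^{4}$ or $\mathbb{CP}^{2}$. This would force $\mathbb{S}^{2}\times\mathbb{S}^{2}$ to be homeomorphic to $\mathbb{S}^{4}$ or $\mathbb{CP}^{2}$, which is a purely topological contradiction. The cleanest way to close this is via cohomology: $H^{2}(\mathbb{S}^{2}\times\mathbb{S}^{2};\mathbb{Z})\cong\mathbb{Z}^{2}$, whereas $H^{2}(\mathbb{S}^{4};\mathbb{Z})=0$ and $H^{2}(\mathbb{CP}^{2};\mathbb{Z})\cong\mathbb{Z}$; alternatively one can compare intersection forms, noting that the intersection form of $\mathbb{S}^{2}\times\mathbb{S}^{2}$ is the hyperbolic form $\bigl(\begin{smallmatrix}0&1\\1&0\end{smallmatrix}\bigr)$, which is even and indefinite, whereas that of $\mathbb{CP}^{2}$ is $(1)$ and that of $\mathbb{S}^{4}$ is trivial.

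There is essentially no analytic obstacle here, because Conjecture \ref{c2.3} supplies exactly the existence statement needed and the Hsiang--Kleiner classification does the geometric heavy lifting. The only step requiring any care is making sure that Conjecture \ref{c2.3} is invoked with the right manifold: it gives us a nonzero Killing field with respect to \emph{some} metric of positive sectional curvature on the same underlying smooth manifold, and we need this Killing field to exist on $\mathbb{S}^{2}\times\mathbb{S}^{2}$ itself rather than on some auxiliary object. Since Conjecture \ref{c2.3} is phrased as a statement about $M$ as a smooth manifold admitting positive sectional curvature, this identification is immediate, and the corollary follows.
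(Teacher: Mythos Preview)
Your argument is correct and is exactly the paper's approach: assume a positively curved metric on $\mathbb{S}^{2}\times\mathbb{S}^{2}$, invoke Conjecture~\ref{c2.3} to produce a nontrivial Killing field for some positively curved metric on the same manifold, and then apply the Hsiang--Kleiner classification to reach a topological contradiction. The paper merely leaves the final topological distinction implicit, whereas you spell it out via $H^{2}$ (or the intersection form); otherwise the proofs coincide.
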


%%%%%%%%%%%%%%%%%%%%%%%%%%%%%%%%%%%%%%%%%%%%%%%%%%%%%%%%%%%%%%%%%%%%%%%%%%%%%%%%
\section{Variants geometric flows}\label{section3}
%%%%%%%%%%%%%%%%%%%%%%%%%%%%%%%%%%%%%%%%%%%%%%%%%%%%%%%%%%%%%%%%%%%%%%%%%%%%%%%%

In the last section, we discuss several new geoemtric flows whose fixed points 
give Killing vector fields. Recall the notions in \cite{Lott03}. Let 
$(M,g)$ be a closed and orientable Riemannian manifold of dimension $m$ and $\phi$ a positive smooth function on $M$. Define
\begin{equation}
\widetilde{{\rm Ric}}_{\infty}:={\rm Ric}-{\rm Hess}(\ln\phi)\label{3.1}
\end{equation}
the Bakry-\'Emery Ricci tensor field. For any smooth tensor field $T$ 
on $M$ consider the weighted $L^{2}$-inner product given by
\begin{equation}
\langle T,T\rangle_{\phi}:=\int_{M}(T,T)\phi\!\ dV\label{3.2}
\end{equation}
and let us denote $\tilde{\delta}$ the formal adjoint of $d$ with
respect to this inner product. Then
\begin{equation}
\tilde{\delta}=\delta-i_{(d\ln\phi)^{\#}}\label{3.3}
\end{equation}
where $\delta$ is the usual formal adjoint of $d$ and $(d\ln\phi)^{\#}$
stands for the corresponding vector field of the $1$-form $d\ln\phi$.

In \cite{Lott03}, Lott obtained the following Bochner formula (where $\omega$
is a $1$-form):
\begin{equation}
\langle d\omega,d\omega\rangle_{\phi}+\langle\tilde{\delta}\omega,
\tilde{\delta}\omega\rangle_{\phi}-\langle\nabla\omega,\nabla\omega\rangle_{\phi}
=\langle\widetilde{{\rm Ric}}_{\infty}\omega,\omega\rangle_{\phi}\label{3.4}
\end{equation}
or
\begin{equation}
\langle\nabla\omega,\nabla\omega\rangle_{\phi}
+\langle\tilde{\delta}\omega,\tilde{\delta}\omega\rangle_{\phi}
-\langle\omega,\widetilde{{\rm Ric}}_{\infty}\omega\rangle_{\phi}
=\langle\mathcal{L}_{\omega^{\#}}g,\mathcal{L}_{\omega^{\#}}g\rangle_{\phi}\label{3.5}
\end{equation}
where $\mathcal{L}$ means the Lie derivative. Let $X:=\omega^{\#}$ or 
$X_{\flat}=\omega$ in (\ref{3.5}) we obtain
\begin{equation}
\int_{M}|\mathcal{L}_{X}g|^{2}\phi\!\ dV
=\int_{M}\left[|\nabla X|^{2}+|\tilde{\delta}X_{\flat}|^{2}
-\widetilde{{\rm Ric}}_{\infty}(X,X)\right]\phi\!\ dV.\label{3.6}
\end{equation}

%%%%%%%%%%%%%%%%%%%%%%%%%%%%%%%%%%%%%%%%%%%%%%%%%%%%%%%%%%%%%%%%%%%%%%%%%%%%%%
\subsection{New criterion: I}\label{subsection3.1}
%%%%%%%%%%%%%%%%%%%%%%%%%%%%%%%%%%%%%%%%%%%%%%%%%%%%%%%%%%%%%%%%%%%%%%%%%%%%%%

Given a smooth function $f$ on $M$, set
\begin{equation}
\phi:=e^{f}, \ \ \ \ln\phi=f\label{3.7}
\end{equation}
and define
\begin{eqnarray*}
{\rm Ric}_{f}&:=&\widetilde{{\rm Ric}}_{\infty} \ \ = \ \
{\rm Ric}-{\rm Hess}(f),\\
{\rm div}_{f}&:=&-\tilde{\delta} \ \ = \ \ -\delta+i_{\nabla f} \ \ = \ \
{\rm div}+i_{\nabla f}.
\end{eqnarray*}
For any smooth vector field $X$ we have
\begin{equation*}
e^{-f}{\rm div}\left(e^{f}X\right)=e^{-f}\left(e^{f}{\rm div}(X)
+e^{f}\langle\nabla f,X\rangle\right)={\rm div}(X)+\langle\nabla f,X\rangle
\end{equation*}
which implies that
\begin{equation}
{\rm div}_{f}=\frac{1}{e^{f}}{\rm div}\left(e^{f}\!\ \right),
\label{3.8}
\end{equation}
a weighted divergence in the sense of \cite{G}. Therefore the identity (\ref{3.6}) can be rewritten as
\begin{equation}
\int_{M}|\mathcal{L}_{X}g|^{2}e^{f}dV
=\int_{M}\left[|\nabla X|^{2}+|{\rm div}_{f}(X)|^{2}
-{\rm Ric}_{f}(X,X)\right]e^{f}dV.\label{3.9}
\end{equation}
On the other hand, we have
\begin{eqnarray*}
\int_{M}|\nabla X|^{2}e^{f}dV&=&\int_{M}\nabla_{i}X_{j}
\left(e^{f}\nabla^{i}X^{j}\right)\!\ dV\\
&=&-\int_{M}X_{j}\left(\nabla_{i}f\nabla^{i}X^{j}
+\Delta X^{j}\right)e^{f}dV\\
&=&-\int_{M}\langle X,\Delta_{f}X\rangle e^{f}dV
\end{eqnarray*}
where
\begin{equation*}
\Delta_{f}X^{j}:=\Delta X^{j}+\nabla_{i}f\nabla^{i}X^{j}.
\end{equation*}
Similarly,
\begin{eqnarray*}
\int_{M}|{\rm div}_{f}(X)|^{2}e^{f}dV&=&
\int_{M}{\rm div}_{f}(X)\left(e^{f}{\rm div}_{f}(X)\right)dV\\
&=&\int_{M}e^{-f}{\rm div}\left(e^{f}X\right)\left(e^{f}
{\rm div}_{f}(X)\right)dV\\
&=&-\int_{M}\langle X,\nabla{\rm div}_{f}(X)\rangle e^{f}dV.
\end{eqnarray*}
Hence the identity (\ref{3.9}) implies
\begin{equation}
\int_{M}|L_{X}g|^{2}e^{f}dV
=-\int_{M}\left\langle X,\Delta_{f}X+\nabla{\rm div}_{f}(X)+{\rm Ric}_{f}(X)\right\rangle e^{f}dV.\label{3.10}
\end{equation}
The above identity shows that the Euler-Lagrange equation for the functional
\begin{equation*}
X\mapsto \int_{M}|L_{X}g|^{2}e^{f}dV
\end{equation*}
is
\begin{equation}
\Delta_{f}X+\nabla{\rm div}_{f}(X)+{\rm Ric}_{f}(X)=0.\label{3.11}
\end{equation}
We now simplify the equation (\ref{3.11}). Compute
\begin{eqnarray*}
\Delta_{f}X^{i}&=&\Delta X^{i}+\nabla_{j}f\nabla^{j}X^{i},\\
\nabla^{i}{\rm div}_{f}(X)&=&\nabla^{i}\left(e^{-f}{\rm div}
\left(e^{f}X\right)\right)\\
&=&\nabla^{i}\left({\rm div}(X)+\langle\nabla f,X\rangle\right)\\
&=&\nabla^{i}{\rm div}(X)+\nabla^{i}(X^{j}\nabla_{j}f)\\
&=&\nabla^{i}{\rm div}(X)+\nabla^{i}X^{j}\nabla_{j}f
+X^{j}\nabla^{i}\nabla_{j}f.
\end{eqnarray*}
Consequently,
\begin{equation}
\Delta_{f}X^{i}+\nabla^{i}{\rm div}_{f}(X)
=\Delta X^{i}+\nabla^{i}{\rm div}(X)+\nabla_{j}f(L_{X}g)^{ij}+X_{j}
\nabla^{i}\nabla^{j}f.\label{3.12}
\end{equation}
Plugging (\ref{3.12}) into (\ref{3.11}) and noting the definition of ${\rm Ric}_{f}$ yields
\begin{equation}
0=\Delta X^{i}+\nabla^{i}{\rm div}(X)+R^{i}{}_{j}X^{j}+\nabla_{j}f(L_{X}g)^{ij}.
\label{3.13}
\end{equation}

As in \cite{Y1}, we can prove the following 

\begin{theorem}\label{t3.1} Given any smooth function $f$ on a closed
orientable Riemanian manifold $(\mathcal{M},g)$. A smooth vector field $X$ is Killing if and only if it satisfies (\ref{3.13}). When $f\equiv0$, it reduces to
the classical criterion of Yano.
\end{theorem}

\begin{proof} Suppose $X$ is Killing. Then $L_{X}g=0$ and $\Delta X
+\nabla{\rm div}(X)+{\rm Ric}(X)=0$ by Yano's theorem. These two equations
immediately imply (\ref{3.13}). Conversely, if $X$ is a smooth vector field
satisfying (\ref{3.13}), then it also satisfies (\ref{3.11}) and then
\begin{equation*}
\int_{M}|\mathcal{L}_{X}g|^{2}e^{f}dV=0
\end{equation*}
according to (\ref{3.10}). Since $\mathcal{L}_{X}g$ is symmetric, by choosing a suitable
coordinates we can diagonalize $\mathcal{L}_{X}g$ into ${\rm diag}(\lambda_{1},\cdots,
\lambda_{m})$ so that each $\lambda_{i}$ must be identically zero. Hence $\mathcal{L}_{X}
g\equiv0$ and $X$ is Killing.
\end{proof}

The above theorem suggests us to consider the following flow
\begin{equation}
\partial_{t}X^{i}=\Delta X^{i}+\nabla^{i}{\rm div}(X)
+R^{i}{}_{j}X^{j}+\nabla_{j}f(\mathcal{L}_{X}g)^{ij}\label{3.14}
\end{equation}
for a given smooth function $f\in C^{\infty}(M)$, or consider
a nonlinear flow
\begin{equation}
\partial_{t}X^{i}=\Delta X^{i}+\nabla^{i}{\rm div}(X)
+R^{i}{}_{j}X^{j}+\nabla_{j}{\rm div}(X)(\mathcal{L}_{X}g)^{ij}.\label{3.15}
\end{equation}

As in the proof of Theorem \ref{t1.8}, we have

\begin{theorem}\label{t3.2} Let $(M,g)$ be a closed orientable Riemannian manifold, $f$ a smooth function on $M$, and $X$ a smooth vector field on $M$. Then the flow (\ref{3.14}) starting with the initial data $X$
smoothly converges to a Killing vector field $X_{\infty}$.
\end{theorem}

\begin{proof} By replacing ${\rm div}, \Delta, dV, {\rm Ric}$ by ${\rm div}_{f},
\Delta_{f}, e^{f}dV, {\rm Ric}_{f}$ in the argument of Theorem \ref{t1.8}, we can show that $\int_{M}|X_{t}|^{2}e^{f}dV$ is decreasing, $\int_{M}
 |\partial_{t}X_{t}|^{2}e^{f}dV\to0$, and then by the
same method $X_{t}$ smoothly converges to a smooth vector field
$X_{\infty}$ satisfying (\ref{3.13}). By Theorem \ref{t3.1}, $X_{\infty}$
must be Killing.
\end{proof}

%%%%%%%%%%%%%%%%%%%%%%%%%%%%%%%%%%%%%%%%%%%%%%%%%%%%%%%%%%%%%%%%%%%%%%%%%%%%%%%%
\subsection{New criterion: II}\label{subsection3.2}
%%%%%%%%%%%%%%%%%%%%%%%%%%%%%%%%%%%%%%%%%%%%%%%%%%%%%%%%%%%%%%%%%%%%%%%%%%%%%%%%

The second new criterion is based on the following identity
\begin{equation}
\int_{M}\left[(\mathcal{L}_{X}g)(X,X)+\frac{1}{2}
{\rm div}(X)|X|^{2}\right]dV=0\label{3.16}
\end{equation}
for any smooth vector field $X$ on $M$. Since $2(\mathcal{L}_{X}g)_{ij}=\nabla_{i}X_{j}
+\nabla_{j}X_{i}$, to prove (\ref{3.16}), we suffice to show that
\begin{equation*}
\int_{M}\left[X^{i}X^{j}\nabla_{i}X_{j}+\frac{1}{2}{\rm div}(X)|X|^{2}
\right]dV=0.
\end{equation*}
Actually,
\begin{eqnarray*}
\int_{M}X^{i}X^{j}\nabla_{i}X_{j}\!\ dV&=&-\int_{M}X_{j}\nabla_{i}(X^{i}X^{j})\!\ dV\\
&=&-\int_{M}X_{j}\left[{\rm div}(X)X^{j}
+X^{i}\nabla_{i}X^{j}\right]dV\\
&=&-\int_{M}{\rm div}(X)|X|^{2}dV-\int_{M}X^{i}X_{j}\nabla_{i}X^{j}\!\ dV
\end{eqnarray*}
which yields
\begin{equation*}
\int_{M}X^{i}X^{j}\nabla_{i}X_{j}\!\ dV=-\frac{1}{2}\int_{M}{\rm div}(X)|X|^{2}\!\ dV.
\end{equation*}

The second new criterion can be stated as follows.

\begin{theorem}\label{t3.3} A smooth vector field $X$ on a closed orientable Riemannian 
manifold $(M,g)$ is a Killing vector field if and only if it satisfies 
\begin{equation}
0=\Delta X+\nabla{\rm div}(X)+{\rm Ric}(X,\cdot)+(\mathcal{L}_{X}g)(X,\cdot)
+\frac{1}{2}{\rm div}(X) X.\label{3.17}
\end{equation}
\end{theorem}

\begin{proof} If $X$ is Killing, then ${\rm div}(X)=\mathcal{L}_{X}g=0$ and 
hence (\ref{3.17}) reduces to Yano's classical result. Conversely, suppose a 
smooth vector field $X$ satisfies (\ref{3.17}). Multiplying (\ref{3.17}) by $X$ 
and integrating over $M$, we obtain
\begin{eqnarray}
0&=&-\int_{M}[|\nabla X|^{2}+|{\rm div}(X)|^{2}-{\rm Ric}(X,X)]dV\nonumber\\
&&+ \ \int_{M}\left[(\mathcal{L}_{X}g)_{ij}X^{i}X^{j}+\frac{1}{2}
{\rm div}(X)|X|^{2}\right]dV.\label{3.18}
\end{eqnarray}
The second integral on the right-hand side equals zero by 
the identity (\ref{3.16}), and consequently, (\ref{3.18}) is equivalent to 
$\mathcal{E}(X)=0$, where $\mathcal{E}(X)$ was defined in (\ref{1.8}). By a 
result of Watanabe \cite{W}, $X$ must be Killing.
\end{proof}

Theorem \ref{t3.3} also suggests a nonlinear equation
\begin{equation}
\partial_{t}X=\Delta X+\nabla{\rm div}(X)+{\rm Ric}(X,\cdot)
+(\mathcal{L}_{X}g)(X,\cdot)+\frac{1}{2}{\rm div}(X)X.\label{3.19}
\end{equation}

We note that the flows (\ref{1.4}) and (\ref{3.14}) linear, while the flows (\ref{3.15}) and (\ref{3.19}) are nonlinear. We will later study those flows and applications to geometry.
\\
\\

%%%%%%%%%%%%%%%%%%%%%%%%%%%%%%%%%%%%%%%%%%%%%%%%%%%%%%%%%%%%%%%%%%%%%%%%%%%%%%%%%%%%%%%%%%%%%%%%%%%%%%%%%%%%%%%%%%

{\bf Acknowledgements.}The authors would like to thank Professors
Shing-Tung Yau, Cliff Taubes, Youde Wang, Xinan Ma and
Hongwei Xu, with whom we have discussed. We also thanks doctors Yan
He and Jianming Wan for several discussion.

%%%%%%%%%%%%%%%%%%%%%%%%%%%%%%%%%%%%%%%%%%%%%%%%%%%%%%%%%%%%%%%%%%%%%%%%%%%%%%

%%%%%%%%%%%%%%%%%%%%%%%%%%%%%%%%%%%%%%%%%%%%%%%%%%%%%%%%%%%%%%%%%%%%%%%%%%%%%%

%%%%%%%%%%%%%%%%%%%%%%%%%%%%%%%%%%%%%%%%%%%%%%%%%%%%%%%%%%%%%%%%%%%%%%%%%%%%%%

%%%%%%%%%%%%%%%%%%%%%%%%%%%%%%%%%%%%%%%%%%%%%%%%%%%%%%%%%%%%%%%%%%%%%%%%%%%%%%

\begin{thebibliography}{10}


\bibitem{BE} Bourguignon J., Ezin L., \textit{Scalar curvature functions in a conformal class of metrics and conformal transformations}, Trans. Amer. Math. Soc., {\bf 301}(1987), no. 2, 723--736.

\bibitem{CJW} Carlson, J., Jaffe, A., Wiles, A., \textit{The millennium prize problems}, Clay Mathematics Institute and American Mathematical Society, 2006.

\bibitem{CLN} Chow, B., Lu, P., Ni, L., \textit{Hamilton's Ricci flow}, Graduate Studies in Mathematics, Volume {\bf 77}, American Mathematical Socity and Science Press, 2006.

\bibitem{G} Gria'yan. \textit{Heat kernels on manifolds}, AMS/IP, MA.



\bibitem{HK} Hsiang, W.-Y., Kleiner, B., \textit{On the topology of positively curved $4$-manifolds with symmetry}, J. Differential. Geom. {\bf 29}(1989), 615--621.

\bibitem{KW} Kazdan J., Warner F., \textit{Curvature functions for compact $2$-manifolds}, Ann. of Math., {\bf 99}(1974), no. 2, 14--47.

\bibitem{K} Kobayashi, S., \textit{Transformation groups in differential
geometry}, Ergebnisse der Mathematik und ihrer Grenzgebiete, Band {\bf 70},
Springer-Verlag, Berlin, Heidelberg, New York, 1972.



\bibitem{LL1} Li, Y., Liu, K., \textit{A geometric heat flow for vector
fields I: Riemannian manifiolds}, preprint.

\bibitem{LL2} Li, Y., Liu, K., \textit{A geometric heat flow for vector
fields II: K\"ahler manifolds}, in preparation.


\bibitem{Lott03} Lott, John. \textit{Some geometric properties of the Bakry-\'Emery-Ricci tensor}, Comment. Math. Helv., {\bf 78}(2003), 865--883.



\bibitem{P} Petersen, P., \textit{Riemannian geometry}, Second Edition, Graduate
Texts in Mathematics, Volume {\bf 171}, Springer, 2006.


\bibitem{SY} Schoen, R., Yau, S.-T., \textit{Lectures on differential geometry}, Conference Proceedings and Lecture Notes in Geometry and Topology, Volume {\bf 1},
    International Press, 1994; \textit{Lectures on differential
geometry}(Chinese), Higher Education Press, 2004.


\bibitem{T} Taylor, M.E., \textit{Partial differential equations}, I---III, 2nd Edition, Applied Mathematical Sciences, Volume {\bf 115}--{\bf 117}, Springer, 2010.


\bibitem{W} Watanabe, Y., \textit{Integral inequalities in compact orientable manifolds, Riemannian or K\"ahlerian}, Kodai Math. Sem. Rep., {\bf 20}(1968), 261--271.


\bibitem{Wilson} Wilson, S. O., \textit{Differential forms, fluids, and finite
models}, Proc. Amer. Math. Soc., {\bf 139}(2011), 2597--2604.

\bibitem{Y1} Yano, K., \textit{On harmonic and Killing vector fields}, Ann. of Math., {\bf 55}(1952) 38--45.

\bibitem{Y4} Yano, K., \textit{Integral formulas in Riemannian geometry}, New York: Marcel Dekker, 1970.

\bibitem{YB} Yano, K., Bochner, S., \textit{Curvature and Betti numbers}, Ann. of Math. Studies, No. {\bf 32}, Princeton University Press, 1953.

\end{thebibliography}
\end{document}